\begin{document}
\newtheorem{theorem}{Theorem}[section]
\newtheorem{corollary}[theorem]{Corollary}
\newtheorem{lemma}[theorem]{Lemma}
\newtheorem{remark}[theorem]{Remark}
\newtheorem{proposition}[theorem]{Proposition}
\newtheorem{definition}[theorem]{Definition}
\def\emptyset{\varnothing}
\def\setminus{\smallsetminus}
\def\Irr{{\mathrm{Irr}}}
\def\Rep{{\mathrm{Rep}}}
\def\End{{\mathrm{End}}}
\def\Tube{{\mathrm{Tube}}}
\def\Vec{{\mathrm{Vec}}}
\def\loc{{\mathrm{loc}}}
\def\opp{{\mathrm{opp}}}
\def\id{{\mathrm{id}}}
\def\A{{\mathcal{A}}}
\def\B{{\mathcal{B}}}
\def\C{{\mathcal{C}}}
\def\D{{\mathcal{D}}}
\def\E{{\mathcal{E}}}
\def\X{{\mathcal{X}}}
\def\CC{{\mathbb{C}}}
\def\N{{\mathbb{N}}}
\def\Q{{\mathbb{Q}}}
\def\R{{\mathbb{R}}}
\def\Z{{\mathbb{Z}}}
\def\a{{\alpha}}
\def\be{{\beta}}
\def\de{{\delta}}
\def\e{{\varepsilon}}
\def\si{{\sigma}}
\def\la{{\lambda}}
\def\th{{\theta}}
\def\lan{{\langle}}
\def\ran{{\rangle}}
\def\isom{{\cong}}
\newcommand{\Hom}{\mathop{\mathrm{Hom}}\nolimits}
\def\qed{{\unskip\nobreak\hfil\penalty50
\hskip2em\hbox{}\nobreak\hfil$\square$
\parfillskip=0pt \finalhyphendemerits=0\par}\medskip}
\def\proof{\trivlist \item[\hskip \labelsep{\bf Proof.\ }]}
\def\endproof{\null\hfill\qed\endtrivlist\noindent}

\title{The relative Drinfeld commutant \\
of a fusion category and $\alpha$-induction}
\author{
{\sc Yasuyuki Kawahigashi}\\
{\small Graduate School of Mathematical Sciences}\\
{\small The University of Tokyo, Komaba, Tokyo, 153-8914, Japan}
\\[0,05cm]
{\small and}
\\[0,05cm]
{\small Kavli IPMU (WPI), the University of Tokyo}\\
{\small 5-1-5 Kashiwanoha, Kashiwa, 277-8583, Japan}\\
{\small e-mail: {\tt yasuyuki@ms.u-tokyo.ac.jp}}}
\maketitle{}
\begin{abstract}
We establish a correspondence among simple
objects of the relative commutant of a full fusion subcategory
in a larger fusion category in the sense of Drinfeld,
irreducible half-braidings of objects in the larger fusion category 
with respect to the fusion subcategory, and
minimal central projections in the relative tube algebra.
Based on this, we explicitly compute certain relative 
Drinfeld commutants of
fusion categories arising from $\alpha$-induction for
braided subfactors.  We present examples arising from
chiral conformal field theory.
\end{abstract}

\section{Introduction}

The notion of a Drinfeld center has been studied well within the
Jones theory of subfactors \cite{J}.  Around 1990, Ocneanu 
realized that his construction of the asymptotic inclusion
$M\vee (M'\cap M_\infty)\subset M_\infty$
from a hyperfinite type II$_1$
subfactor $N\subset M$ with finite index and finite depth gives
an operator algebraic counterpart of the Drinfeld center
construction, also called the Drinfeld double or
the ``quantum double''.  We refer the reader to \cite{EK1} for
Ocneanu's theory and related results, and to \cite{I} for
an approach based on Longo's sector theory \cite{L1}, \cite{L2}.

The notion of a Drinfeld center is similar to that of a usual
center of an algebra, as the name shows. Henriques \cite{H}
recently studies the Drinfeld version of a (relative/double) commutant
of a fusion category.  In this paper, we study the notion
of the relative commutant of a full fusion subcategory in another
fusion category and clarify its relations to (the relative
version of) Ocneanu's tube algebra and half-braidings along
the line of \cite{I}.  We have made several computations of
the Drinfeld centers for certain fusion categories arising from
$\a$-induction in \cite{BEK3}.  (Here $\a$-induction is a
certain induction machinery originally introduced for an
extension of a chiral conformal field theory in \cite{LR}.)
In this paper, we make analogous computations for
the relative commutants of these fusion categories arising
from $\a$-induction.  Our methods are similar to those in
\cite{BEK3} and rely on half-braidings arising from 
relative braidings studied in \cite{BE3}.

We refer the reader to \cite{EK2} for a general theory
of subfactors and to \cite{K} for a review on subfactor theory,
category theory, and conformal field theory.

The author would like to thank the Isaac Newton Institute for
Mathematical Sciences, Cambridge, for support and hospitality during
the program ``Operator Algebras: Subfactors and their Applications''
where work on this paper was undertaken.
This work was partially supported by EPSRC Grant Number EP/K032208/1.
This work was also partially supported by a grant from the
Simons Foundation and Grants-in-Aid for Scientific Research
15H02056.  The author thanks M. Izumi and R. Longo for comments
on this work.  The author also thanks the referee for helpful
report.

\section{The relative tube algebra and the relative Drinfeld
commutant}
\label{tube}

Let $\D$ be a unitary fusion category and $\C$ its full
subcategory.  (We consider only {\sl unitary} fusion categories
in this paper.)  We may and do assume that $\D$ is realized as a
category of endomorphisms of a type III factor $M$ with 
finite index.  We fix a representative in each equivalence
class of simple objects of $\D$ and let $\Irr(\D)$ be the
set of such representatives.  The set $\Irr(\C)$ is a
subset of $\Irr(\D)$ consisting of objects in $\C$.
We assume that the identity morphism is in $\Irr(\C)$ and
denote it by $\id$.
For an object $\la$ in $\D$, we write $d(\la)$ for its
dimension, the square root of the minimal index of the
subfactor $\la(M)\subset M$.
We set $\dim \C$ to be the square sum of the dimensions of
the equivalence classes of the simple objects of $\C$.
(That is, we have $\dim \C=\sum_{\la\in\Irr(\C)} d(\la)^2$.)
We similarly define $\dim \D$.
For an object $\la$ in $\D$, we write $\phi_\la$ for
its standard left inverse.  (See \cite[page 238]{L1}
for a left inverse.)

We first have a notion of a half-braiding of an object in
$\D$ with respect to $\C$ as in \cite[Definition 2.2]{BEK3}
which is a slight generalization of \cite[Definition 4.2]{I}.

\begin{definition}{\rm
Let $\si$ be an object of $\D$.  We call a family of
unitary intertwiners $\E_\si=\{\E_\si(\be)\}_{\be\in\Irr(\C)}$
a half-braiding of $\si$ with respect to $\C$ if it
satisfies the following two conditions.

(1) We have $\E_\si(\be)\in\Hom(\si\be,\be\si)$ for all
$\be\in\Irr(\C)$.

(2) For $\be_1,\be_2,\be_3\in\Irr(\C)$, we have
\[
X\E_\si(\be_3)=\be_1(\E_\si(\be_2))\E_\si(\be_1)\si(X)
\]
for any $X\in\Hom(\be_3,\be_1\be_2)$.

Two pairs $(\si,\E_\si)$, $(\si',\E'_{\si'})$ of
objects $\si,\si'$ in $\D$ with respective half-braidings
$\E_\si, \E'_{\si'}$ are said to be equivalent of there
is a unitary intertwiner $u\in\Hom(\si',\si)$ with
\[
\E_\si(\be)=\be(u)\E'_{\si'}(\be)u^*
\]
for all $\be\in\Irr(\C)$.
}\end{definition}

The equality in (2) is called the braiding-fusion equation.

In order to distinguishing different half-braidings for the
same $\si$, we use the notation $\E^\a_\si$ as in \cite{I},
where $\a$ denotes an index.

The objects in $\D$ with half-braidings
with respect to $\C$ make a fusion category as in
\cite[Definition 2.1]{H}.  We call it the
relative Drinfeld commutant of $\C$ in $\D$ and
write $\C'\cap \D$ for it.
(It is called simply the commutant of $\C$ in $\D$
in \cite[Section 2.1]{H}, but we add the word
``Drinfeld'' in order to emphasize that this is
different from a usual relative commutant of a
subalgebra.)  
Note that the conjugate half-braiding 
$\E^{\bar\a}_{\bar\si}$ of a half-braiding 
$\{\E^\a_\si(\be)\}_{\be\in\Irr(\C)}$ is
defined by $\E^{\bar\a}_{\bar\si}(\be)=
d(\si)R^*_\si(\E^\a_\si(\be)^*\be(\bar R_\si))$
as in \cite[Theorem 4.6 (iv)]{I}. 
For half-braidings 
$\{\E_\si(\be)\}_{\be\in\Irr(\C)}$ and
$\{\E_{\si'}(\be)\}_{\be\in\Irr(\C)}$, the fusion
product is given by
$\{\E_\si(\be)\si(\E_{\si'}(\be))\}_{\be\in\Irr(\C)}$.
For half-braidings 
$\{\E_\si(\be)\}_{\be\in\Irr(\C)}$ and
$\{\E_{\si'}(\be)\}_{\be\in\Irr(\C)}$, an intertwiner
from the former to the latter is given by
$X\in\Hom(\si,\si')$ satisfying
$\E_{\si'}(\be)X=\be(X)\E_\si(\be)$ for all $\be\in\Irr(\C)$.

Obviously, the fusion category $\C'\cap \C$, the Drinfeld
center of $\C$, is a full subcategory
of $\C'\cap \D$, but note that $\D'\cap\D$ is not a
full subcategory of $\C'\cap\D$.
If $\C$ is a trivial category $\Vec$
of finite dimensional Hilbert spaces, then $\C'\cap \D$
is simply $\D$.

We choose a representative $\{\E^\a_\si\}$
from each equivalence class of simple objects in 
$\C'\cap\D$ and write $\Irr(\C'\cap\D)$ for the
set consisting of them.

We next introduce the relative tube algebra which generalizes
a notion of Ocneanu's tube algebra studied in
\cite[Section 3]{EK1}, \cite[Section 3]{I}.

\begin{definition}{\rm
We set the relative tube algebra $\Tube(\C,\D)$ to be
\[
\bigoplus_{\la,\nu\in\Irr(\D),\mu\in\Irr(\C)}
\Hom(\la\mu,\mu\nu)
\]
as a linear space.  We define its algebra structure and
$*$-structure by the same formulas as in \cite[page 134]{I}.
}\end{definition}

As in \cite[Section 3]{I}, we write $(\la\mu|X|\mu\nu)$
for $X\in\Hom(\la\mu,\mu\nu)$ for indicating which
space $X$ belongs to.

For $(\la\mu|X|\mu\nu)\in\Tube(\C,\D)$, we set
\[
\varphi_{\C,\D}((\la\mu|X|\mu\nu))=
d(\la)^2\de_{\la,\nu}\de_{\mu,0}X.
\]

Note that $X$ on the right hand side is a scalar
in $\Hom(\la,\la)$ if the right hand side does not vanish.
We remark that
$\Tube(\C,\D)$ is a finite dimensional $C^*$-algebra
as exactly in \cite[Proposition 3.2]{I}.

Now we follow the arguments in \cite[page 146]{I}.
Let $\{\E^\a_\si(\be)\}_{\be\in\Irr(\C)}$ be a half-braiding
of an object $\si$ in $\D$ with respect to $\C$ where we have
$[\si]=\bigoplus_{\la\in\Irr(\D)} n_\la [\la]$.  We fix an
orthonormal basis $\{W_\si(\la)_i\}_{i=1}^{n_\la}\subset
\Hom(\la,\si)$ and put
\[
\E^\a_\si(\be)_{(\la,i),(\mu,j)}=
\be(W_\si(\mu)^*_j)\E^\a_\si(\be)W_\si(\la)_i\in
\Hom(\la\be,\be\mu)\subset\Tube(\C,\D),
\]
where $\be$ is in $\Irr(\C)$.  We then have
\[
\E^\a_\si(\be)=\sum_{\la,\mu\in\Irr(\D)}
\sum_{i=1}^{n_\la}\sum_{j=1}^{n_\mu}
\be(W_\si(\mu)_j)\E^\a_\si(\be)_{(\la,i),(\mu,j)}W_\si(\la)^*_i.
\]
We next put
\[
e(\E_\si^\a)_{(\la,i),(\mu,j)}=\frac{d(\si)}
{(\dim \C) d(\la)^{1/2}d(\mu)^{1/2}}
\sum_{\be\in\Irr(\C)} d(\be)
(\la\be|\E^\a_\si(\be)_{(\la,i),(\mu,j)}|\be\mu)\in\Tube(\C,\D).
\]

The following is a slight generalization of \cite[Lemma 4.7]{I}
with essentially the same proof.

\begin{lemma}
For $e(\E_\si^\a)_{(\la,i),(\mu,j)}$ as above and
$X\in\Hom(\nu\si,\si\tau)\subset\Tube(\C,\D)$ where
$\si\in\Irr(\C)$ and $\nu,\tau\in\Irr(\D)$, we have the following.

(1) We have 
$e(\E_\si^\a)^*_{(\la,i),(\mu,j)}=e(\E_\si^\a)_{(\mu,j),(\la,i)}$.

(2) We have
\begin{align*}
&e(\E_\si^\a)_{(\la,i),(\mu,j)}(\nu\si|X|\si\tau)\\
=&\de_{\mu,\nu}
\frac{d(\si)d(\tau)^{1/2}}{d(\mu)^{1/2}}
\sum_k\phi_\si(X\E_\si^\a(\si)^*_{(\mu,j),(\tau,k)})
e(\E_\si^\a)_{(\la,i),(\tau,k)}.
\end{align*}

(3) We have 
\begin{align*}
&(\nu\si|X|\si\tau)e(\E_\si^\a)_{(\la,i),(\mu,j)}\\
=&\de_{\tau,\la}
\frac{d(\si)d(\nu)^{1/2}}{d(\la)^{1/2}}
\sum_k\phi_\nu(\E_\si^\a(\si)^*_{(\nu,k),(\la,i)}X)
e(\E_\si^\a)_{(\nu,k),(\mu,j)}.
\end{align*}
\end{lemma}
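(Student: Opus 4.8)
The plan is to follow the proof of \cite[Lemma 4.7]{I} almost verbatim, the only change being that the summation index $\be$ now runs over $\Irr(\C)$ rather than over all of $\Irr(\D)$; the structural inputs are the braiding-fusion equation, the orthonormality of the basis $\{W_\si(\la)_i\}$, and the product and $*$ formulas of \cite[page 134]{I}. For part (1), I would compute the adjoint termwise. Since the $*$-operation carries $\Hom(\la\be,\be\mu)$ onto $\Hom(\mu\be,\be\la)$, the adjoint of each summand $(\la\be|\E_\si^\a(\be)_{(\la,i),(\mu,j)}|\be\mu)$ lands in the summand of $e(\E_\si^\a)_{(\mu,j),(\la,i)}$ attached to the same $\be$. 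Using $\E_\si^\a(\be)_{(\la,i),(\mu,j)}^*=W_\si(\la)_i^*\E_\si^\a(\be)^*\be(W_\si(\mu)_j)$, the unitarity of $\E_\si^\a(\be)$, and the compatibility of the tube $*$-structure with conjugation, each summand is identified with the corresponding summand of $e(\E_\si^\a)_{(\mu,j),(\la,i)}$; since the scalar weight $d(\be)/((\dim\C)d(\la)^{1/2}d(\mu)^{1/2})$ is symmetric under $(\la,i)\leftrightarrow(\mu,j)$, the two elements coincide.

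For part (2), I would expand $e(\E_\si^\a)_{(\la,i),(\mu,j)}$ as its weighted sum over $\be$ and multiply each generator on the right by $(\nu\si|X|\si\tau)$ using the product of \cite[page 134]{I}. Matching of the middle objects forces $\mu=\nu$, which accounts for the factor $\de_{\mu,\nu}$; the product of the two generators then becomes a sum over $\gamma\in\Irr(\C)$ and an orthonormal basis $T\in\Hom(\gamma,\be\si)$, with weight $d(\be)^{1/2}d(\si)^{1/2}d(\gamma)^{-1/2}$, of intertwiners $T^*\be(X)\E_\si^\a(\be)_{(\la,i),(\mu,j)}\la(T)\in\Hom(\la\gamma,\gamma\tau)$. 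The key step is to apply the braiding-fusion equation with $\be_1=\be$, $\be_2=\si$, $\be_3=\gamma$ and $T\in\Hom(\gamma,\be\si)$, giving $T\E_\si^\a(\gamma)=\be(\E_\si^\a(\si))\E_\si^\a(\be)\si(T)$. After inserting the definition of $\E_\si^\a(\be)_{(\la,i),(\mu,j)}$, this identity lets me split off the factor $\E_\si^\a(\si)$ evaluated at the fixed $\si$, which combines with $X$ and, once the $\si$-leg is contracted by the standard left inverse, produces the scalar $\phi_\si(X\E_\si^\a(\si)^*_{(\mu,j),(\tau,k)})$. The remaining dependence on $\be$, the weight $d(\be)$, and the basis $T$ reassemble into $e(\E_\si^\a)_{(\la,i),(\tau,k)}$, and tracking the dimension factors produces the stated coefficient $d(\si)d(\tau)^{1/2}d(\mu)^{-1/2}$. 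This reorganization of the double sum over $\be$ and $T$ into a single application of $\phi_\si$ is the main obstacle, and is exactly where the orthonormality of the $W_\si$'s and the precise normalization of $e(\E_\si^\a)$ must be used carefully.

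For part (3), rather than repeat the computation I would deduce it from (1) and (2) by taking adjoints: write
\[
(\nu\si|X|\si\tau)\,e(\E_\si^\a)_{(\la,i),(\mu,j)}
=\bigl(e(\E_\si^\a)_{(\la,i),(\mu,j)}^*\,(\nu\si|X|\si\tau)^*\bigr)^*,
\]
replace $e(\E_\si^\a)_{(\la,i),(\mu,j)}^*$ by $e(\E_\si^\a)_{(\mu,j),(\la,i)}$ using part (1), and observe that $(\nu\si|X|\si\tau)^*$ is again a tube generator with middle object $\si\in\Irr(\C)$, of the form $(\tau\si|X'|\si\nu)$. Applying part (2) to this product reproduces the constraint $\de_{\tau,\la}$ and the coefficient $d(\si)d(\nu)^{1/2}d(\la)^{-1/2}$; taking the adjoint once more and using part (1) converts the resulting $e(\E_\si^\a)_{(\mu,j),(\nu,k)}$ back into $e(\E_\si^\a)_{(\nu,k),(\mu,j)}$. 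The only nontrivial point here is to rewrite the complex conjugate of the coefficient produced by (2) as $\phi_\nu(\E_\si^\a(\si)^*_{(\nu,k),(\la,i)}X)$; this uses the explicit form of the $*$ of a tube generator and the compatibility of the left inverses $\phi_\si$ and $\phi_\nu$ with conjugation, after which (3) follows.
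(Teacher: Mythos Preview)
Your proposal is correct and matches the paper's own treatment exactly: the paper simply states that this lemma is a slight generalization of \cite[Lemma 4.7]{I} with essentially the same proof, the only change being that the middle index $\be$ ranges over $\Irr(\C)$ rather than the full system, which is precisely what you outline. One small slip worth fixing in your write-up is that the tube $*$-operation sends the $\be$-summand to the $\bar\be$-summand (not the same $\be$); this is harmless here because $\C$, being a full fusion subcategory, is closed under conjugates and $d(\bar\be)=d(\be)$, so the sum over $\Irr(\C)$ is preserved after relabeling.
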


The following is also a slight generalization of \cite[Corollary 4.8]{I}
with essentially the same proof.

\begin{corollary}
Let $\E^\a_\si\in\Irr(\C'\cap\D)$.  Then
$z(\E^\a_\si)=\sum_{\la,i} e(\E^\a_\si)_{(\la,i),(\la,i)}$
is in the center of $\Tube(\C,\D)$.
\end{corollary}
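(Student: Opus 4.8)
The goal is to show that $z(\E^\a_\si)$ commutes with every element of $\Tube(\C,\D)$. Since the tube algebra is spanned by homogeneous elements of the form $(\nu\si'|X|\si'\tau)$ with $\si'\in\Irr(\C)$ and $\nu,\tau\in\Irr(\D)$, it suffices to check that $z(\E^\a_\si)$ commutes with each such generator. The plan is to compute the two products $z(\E^\a_\si)(\nu\si'|X|\si'\tau)$ and $(\nu\si'|X|\si'\tau)z(\E^\a_\si)$ separately, using parts (2) and (3) of the preceding Lemma, and to show that both expressions coincide.

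First I would expand $z(\E^\a_\si)=\sum_{\la,i} e(\E^\a_\si)_{(\la,i),(\la,i)}$ and apply Lemma part (2) termwise to the left product. The Kronecker delta $\de_{\mu,\nu}$ appearing there (with $\mu=\la$ in the diagonal summand) forces the only surviving contributions to come from the index $\la=\nu$, collapsing the double sum over $(\la,i)$ to a single sum over $i$. This yields
\[
z(\E^\a_\si)(\nu\si'|X|\si'\tau)=
\frac{d(\si')d(\tau)^{1/2}}{d(\nu)^{1/2}}
\sum_{i,k}\phi_{\si'}(X\E_\si^\a(\si')^*_{(\nu,i),(\tau,k)})
\,e(\E_\si^\a)_{(\nu,i),(\tau,k)}.
\]
Symmetrically, applying Lemma part (3) to the right product and using the delta $\de_{\tau,\la}$ to collapse the sum to $\la=\tau$, I would obtain an analogous expression involving $\phi_\nu(\E_\si^\a(\si')^*_{(\nu,k),(\tau,i)}X)$ and the same family of generators $e(\E_\si^\a)_{(\nu,k),(\tau,i)}$ (after relabeling the summation indices $i\leftrightarrow k$ so that both sides carry the same tube-algebra elements).

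The main obstacle, and the heart of the argument, is to verify that the two scalar coefficients agree, i.e.\ that
\[
\frac{d(\si')d(\tau)^{1/2}}{d(\nu)^{1/2}}
\phi_{\si'}(X\E_\si^\a(\si')^*_{(\nu,i),(\tau,k)})
=
\frac{d(\si')d(\nu)^{1/2}}{d(\tau)^{1/2}}
\phi_\nu(\E_\si^\a(\si')^*_{(\nu,i),(\tau,k)}X).
\]
This is precisely a trace-type identity relating the standard left inverses $\phi_{\si'}$ and $\phi_\nu$ through the intertwiner $X\in\Hom(\nu\si',\si'\tau)$ and the matrix element $\E_\si^\a(\si')^*_{(\nu,i),(\tau,k)}\in\Hom(\si'\tau,\nu\si')$. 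I expect it to follow from the defining compatibility of standard left inverses with the normalized categorical trace together with the dimension bookkeeping built into the factors $d(\cdot)^{1/2}$, exactly as in the proof of \cite[Corollary 4.8]{I}. Once this coefficient identity is established, both products are manifestly equal and $z(\E^\a_\si)$ lies in the center, completing the proof.
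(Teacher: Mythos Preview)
Your proposal is correct and follows exactly the approach indicated by the paper, which simply defers to the proof of \cite[Corollary~4.8]{I}: apply parts (2) and (3) of the preceding Lemma to collapse the sums via the Kronecker deltas, and then match the scalar coefficients using the trace property $d(\tau)\,\phi_{\si'}(XY)=d(\nu)\,\phi_\nu(YX)$ for $X\in\Hom(\nu\si',\si'\tau)$ and $Y\in\Hom(\si'\tau,\nu\si')$. Your write-up is in fact more explicit than what the paper provides.
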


The following is also a slight generalization of \cite[Lemma 4.9]{I}
with essentially the same proof.

\begin{lemma}
In the above setting, we have the following.

(1) We have $\sum_{\E^\a_\si\in\Irr(\C,\D)} d(\E^\a_\si)^2=
\dim \C\; \dim \D$.

(2) We have $\varphi_{\C,\D}(z(\E^\a_\si))=d(\si)^2/\dim \C$.

(3) We have $\varphi_{\C,\D}(1)=\dim \D$.
\end{lemma}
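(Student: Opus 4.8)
The plan is to obtain (3) and (2) as direct evaluations of the functional $\varphi_{\C,\D}$, and then to read off (1) by summing over the central projections $z(\E^\a_\si)$. For (3), I would first identify the unit of $\Tube(\C,\D)$: with the multiplication of \cite{I}, one checks that $1=\sum_{\la\in\Irr(\D)}(\la\,\id\,|\,\id_\la\,|\,\id\,\la)$, the middle label being the tensor unit. Evaluating $\varphi_{\C,\D}$ term by term, each summand has outer labels $\la=\nu$ and middle label $\mu=\id$, so the factor $\de_{\la,\nu}\de_{\mu,0}$ equals $1$ and the contribution is the scalar $d(\la)^2$. Summing gives $\varphi_{\C,\D}(1)=\sum_{\la\in\Irr(\D)}d(\la)^2=\dim\D$.

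For (2), I expand $z(\E^\a_\si)=\sum_{\la,i}e(\E^\a_\si)_{(\la,i),(\la,i)}$ using the definition of $e$. In each component $(\la\be|\E^\a_\si(\be)_{(\la,i),(\la,i)}|\be\la)$ the functional $\varphi_{\C,\D}$ forces $\be=\id$ through $\de_{\mu,0}$. Putting $\be_1=\id$ in the braiding-fusion equation shows $\E^\a_\si(\id)=\id_\si$, so by orthonormality of $\{W_\si(\la)_i\}$ one has $\E^\a_\si(\id)_{(\la,i),(\la,i)}=W_\si(\la)_i^*\,\id_\si\,W_\si(\la)_i=\id_\la$, the scalar $1$. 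Hence
\[
\varphi_{\C,\D}(z(\E^\a_\si))=\sum_{\la,i}\frac{d(\si)}{(\dim\C)\,d(\la)}\,d(\id)\,d(\la)^2=\frac{d(\si)}{\dim\C}\sum_{\la\in\Irr(\D)}n_\la\,d(\la)=\frac{d(\si)^2}{\dim\C},
\]
where the last step uses $\sum_\la n_\la d(\la)=d(\si)$.

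For (1), I sum (2) over $\Irr(\C'\cap\D)$ and compare with (3). The indispensable input is the completeness relation $\sum_{\E^\a_\si\in\Irr(\C'\cap\D)}z(\E^\a_\si)=1$ in $\Tube(\C,\D)$. Granting this, applying $\varphi_{\C,\D}$ and using (2) and (3) gives
\[
\dim\D=\varphi_{\C,\D}(1)=\sum_{\E^\a_\si\in\Irr(\C'\cap\D)}\varphi_{\C,\D}(z(\E^\a_\si))=\frac{1}{\dim\C}\sum_{\E^\a_\si\in\Irr(\C'\cap\D)}d(\si)^2.
\]
Since the conjugate $(\bar\si,\E^{\bar\a}_{\bar\si})$ in $\C'\cap\D$ uses the same $R_\si,\bar R_\si$ as $\si$ in $\D$ (as recalled in the definition of the conjugate half-braiding), the dimension is unchanged, $d(\E^\a_\si)=d(\si)$; multiplying by $\dim\C$ yields $\sum_{\E^\a_\si}d(\E^\a_\si)^2=\dim\C\,\dim\D$.

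The hard part is the completeness $\sum_{\E^\a_\si}z(\E^\a_\si)=1$. The preceding Lemma furnishes the matrix-unit relations and the Corollary their centrality, so for each $\E^\a_\si\in\Irr(\C'\cap\D)$ the family $\{e(\E^\a_\si)_{(\la,i),(\mu,j)}\}$ is a system of matrix units with central sum $z(\E^\a_\si)$, and distinct ones are orthogonal minimal central projections; since $\varphi_{\C,\D}$ is a positive trace, this already gives $\sum_{\E^\a_\si}d(\E^\a_\si)^2\le\dim\C\,\dim\D$. Promoting this to an equality amounts to showing that the $e(\E^\a_\si)_{(\la,i),(\mu,j)}$ span $\Tube(\C,\D)$, equivalently that every minimal central projection arises from a half-braiding of $\C$ in $\D$. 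This is the relative analogue of the half-braiding/tube-algebra correspondence of \cite{I} and is the only non-computational ingredient; once it is in place, (1)--(3) follow as above.
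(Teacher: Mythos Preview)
Your computations for (2) and (3) are correct and are exactly the direct evaluations one finds in Izumi's proof of \cite[Lemma~4.9]{I}, to which the paper defers. For (1) your argument via $\sum_{\E^\a_\si} z(\E^\a_\si)=1$ is also the intended one; the only point to flag is the logical order. In the paper this completeness relation is stated as part of the \emph{subsequent} Theorem (the analogue of \cite[Theorem~4.10]{I}), so as written your proof of (1) makes a forward reference. This is not circular, because---as you correctly indicate in your last paragraph---the surjectivity of the half-braiding/tube-algebra correspondence (every minimal central projection of $\Tube(\C,\D)$ arises as some $z(\E^\a_\si)$) is established in \cite{I} by a direct construction from a minimal projection to a half-braiding, without invoking the dimension formula. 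With that understood, your derivation of (1) from (2), (3) and completeness is exactly the paper's (i.e.\ Izumi's) route. Your remark that $d(\E^\a_\si)=d(\si)$ because the conjugate in $\C'\cap\D$ uses the same $R_\si,\bar R_\si$ is also correct and is the standard justification.
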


The following is again a slight generalization of
\cite[Theorem 4.10]{I} with essentially the same proof.

\begin{theorem}
Let $e(\E_\si^\a)_{(\la,i),(\mu,j)}$ be as above. 
Then we have the following.

(1) The system
$\{e(\E_\si^\a)_{(\la,i),(\mu,j)}\}_{(\la,i),(\mu,j)}$
is a system of matrix units of a simple component of
$\Tube(\C,\D)$.

(2) The operators $\{z(\E_\si^\a)\}_{\E^\a_\si\in\Irr(\C'\cap\D)}$
are mutually orthogonal minimal central projections of
$\Tube(\C,\D)$ with
$\sum_{\E^\a_\si\in\Irr(\C'\cap\D)}z(\E^\a_\si)=1$.
\end{theorem}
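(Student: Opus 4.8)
The goal of this theorem is to show that the matrix units $e(\E_\si^\a)_{(\la,i),(\mu,j)}$ organize the relative tube algebra $\Tube(\C,\D)$ into simple components indexed by $\Irr(\C'\cap\D)$, with the central projections $z(\E_\si^\a)$ resolving the identity. The plan is to proceed exactly as in the proof of \cite[Theorem 4.10]{I}, using the computations already assembled in the preceding lemmas as the engine.

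First I would establish part (1). The relations in Lemma 2.7(1) give $e(\E_\si^\a)^*_{(\la,i),(\mu,j)}=e(\E_\si^\a)_{(\mu,j),(\la,i)}$, which is the self-adjointness required of matrix units. To verify the multiplicative relation $e(\E_\si^\a)_{(\la,i),(\mu,j)}e(\E_\si^\a)_{(\nu,k),(\tau,l)}=\de_{(\mu,j),(\nu,k)}e(\E_\si^\a)_{(\la,i),(\tau,l)}$, I would expand the second factor using its definition as a sum over $\be\in\Irr(\C)$ of terms $(\nu\be|\E_\si^\a(\be)_{(\nu,k),(\tau,l)}|\be\tau)$, and apply Lemma 2.7(2) term by term. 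The key input is that $\{\E_\si^\a(\si)_{(\mu,j),(\tau,k)}\}$ behaves like a projection-valued system once one applies $\phi_\si$, with the normalization constants $d(\si)d(\tau)^{1/2}/d(\mu)^{1/2}$ collapsing the double sum to the expected single term. The orthonormality of the basis $\{W_\si(\la)_i\}$ and the half-braiding axiom (the braiding-fusion equation) are what force the Kronecker deltas to appear. Once the matrix-unit relations hold, the span of $\{e(\E_\si^\a)_{(\la,i),(\mu,j)}\}$ is a full matrix algebra, hence a simple component.

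For part (2), the element $z(\E_\si^\a)=\sum_{\la,i}e(\E_\si^\a)_{(\la,i),(\la,i)}$ is the sum of the diagonal matrix units of the simple component from (1), so it is the unit of that component and therefore a minimal central projection; centrality was already recorded in Corollary 2.8. Mutual orthogonality of $z(\E_\si^\a)$ and $z(\E_{\si'}^{\a'})$ for inequivalent half-braidings follows because distinct simple objects of $\C'\cap\D$ yield matrix units lying in distinct components, which multiply to zero. The remaining point is completeness, $\sum_{\E_\si^\a\in\Irr(\C'\cap\D)}z(\E_\si^\a)=1$. This is a dimension count: by Lemma 2.9(1) the sum $\sum d(\E_\si^\a)^2$ equals $\dim\C\,\dim\D$, and this must match the dimension of $\Tube(\C,\D)$ as a $C^*$-algebra. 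I would verify that $\dim_{\CC}\Tube(\C,\D)=\sum_{\la,\nu\in\Irr(\D),\mu\in\Irr(\C)}\dim\Hom(\la\mu,\mu\nu)$ equals the sum of squares of the matrix sizes of the components found, so that the orthogonal projections $z(\E_\si^\a)$ already exhaust the identity.

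The main obstacle is the multiplicative relation in part (1): one must carefully track the normalization factors and confirm that applying Lemma 2.7(2) produces exactly $\de_{(\mu,j),(\nu,k)}$ times the correct matrix unit, with no residual scalar. This is where the precise form of the constants $d(\si)/((\dim\C)d(\la)^{1/2}d(\mu)^{1/2})$ in the definition of $e(\E_\si^\a)_{(\la,i),(\mu,j)}$ is essential, and where the standard left inverse $\phi_\si$ interacts with the half-braiding. Since this computation is declared to be essentially identical to that of \cite[Theorem 4.10]{I}, I would cite that proof for the bookkeeping and emphasize only that the relative setting (summing over $\mu\in\Irr(\C)$ rather than all of $\Irr(\D)$) changes nothing in the formal manipulation, because all the relevant intertwiner spaces and the braiding-fusion equation are indexed by objects of $\C$ throughout.
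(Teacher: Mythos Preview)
Your outline for part (1) and for the centrality and mutual orthogonality in part (2) is correct and is exactly the route taken in \cite[Theorem 4.10]{I}, which is what the paper invokes. The matrix-unit relations follow from the preceding lemma (parts (1)--(3)), and orthogonality between matrix units attached to inequivalent irreducible half-braidings comes out of the same product formula together with irreducibility in $\C'\cap\D$.

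There is, however, a genuine gap in your completeness argument. You propose to deduce $\sum_{\E^\a_\si} z(\E^\a_\si)=1$ from a vector-space dimension count, invoking the identity $\sum_{\E^\a_\si} d(\E^\a_\si)^2=\dim\C\cdot\dim\D$ and claiming this equals $\dim_\CC\Tube(\C,\D)$. But $d(\E^\a_\si)=d(\si)$ is the \emph{quantum} dimension of $\si$, not the size $\sum_\la n_\la$ of the matrix-unit system you built; these differ whenever some $d(\la)\neq 1$. Indeed $\dim\C\cdot\dim\D$ is typically irrational, so it cannot equal the integer $\dim_\CC\Tube(\C,\D)$. The identity from the lemma is therefore not a statement about matrix sizes and cannot be used as you suggest.

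The correct argument, and the one Izumi uses, is a trace comparison via the faithful positive functional $\varphi_{\C,\D}$. Combining all three parts of the preceding lemma gives
\[
\sum_{\E^\a_\si\in\Irr(\C'\cap\D)}\varphi_{\C,\D}\bigl(z(\E^\a_\si)\bigr)
=\frac{1}{\dim\C}\sum_{\E^\a_\si} d(\si)^2
=\frac{\dim\C\cdot\dim\D}{\dim\C}
=\dim\D
=\varphi_{\C,\D}(1).
\]
Since $1-\sum_{\E^\a_\si} z(\E^\a_\si)$ is a projection and $\varphi_{\C,\D}$ is faithful, this forces $\sum_{\E^\a_\si} z(\E^\a_\si)=1$. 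So you should invoke parts (2) and (3) of that lemma (the values of $\varphi_{\C,\D}$ on $z(\E^\a_\si)$ and on $1$), not only part (1), and replace the dimension count by this trace argument.
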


\section{A half-braiding and $\eta$-extension}

We keep the notation of Section \ref{tube}.
Let $M\otimes M^{\opp}\subset R$ be the Longo-Rehren subfactor
\cite{LR}
corresponding to $\Irr(\C)$ with the dual canonical endomorphism 
$\Theta=\bigoplus_{\la\in\Irr(\C)}\la\otimes\la^\opp$
and the inclusion map $\iota_{LR}: M\otimes M^\opp\hookrightarrow R$.
Here we use the anti-isomorphism $j:M\to M^\opp$ and
$\si^\opp=j\cdot\si\cdot j^{-1}$ for an endomorphism $\si$ of $M$
which is an endomorphism of $M^\opp$.  We have an isometry $V\in R$ with
$(M\otimes M^\opp)V=R$ and
$Vx=\iota_{LR}\cdot\bar\iota_{LR}(x)V$ for $x\in R$
since the Longo-Rehren subfactor has a finite index.

The following is a direct analogue of \cite[Theorem 4.1]{I}
and can be proved in the same way.
(The Longo-Rehren subfactor studied in \cite{I} is dual to the
one studied in \cite{BEK3} and here, but this difference is only
superficial.)

\begin{proposition}\label{pro}
(1) The set $\{\la\otimes\mu^\opp\}_{\la\in\Irr(\D),\mu\in\Irr(\C)}$ gives
mutually inequivalent $M\otimes M^\opp$-$M\otimes M^\opp$ sectors
and the sectors associated with the Longo-Rehren subfactor
$M\otimes M^{\opp}\subset R$ give its subset.

(2) The set $\{\iota_{LR}\cdot(\la\otimes\id^\opp)\}_{\la\in\Irr(\D)}$
gives mutually inequivalent $R$-$M\otimes M^\opp$ sectors.
We have
$[\iota_{LR}\cdot(\la\otimes\id^\opp)]=
[\iota_{LR}\cdot(\id\otimes\bar\la^\opp)]$ for $\la\in\Irr(\C)$,
and
\[
[\iota_{LR}\cdot(\la\otimes\mu^\opp)]=\bigoplus_{\nu\in\Irr(\D)}
N_{\bar\mu,\la}^\nu [\iota_{LR}\cdot(\nu\otimes\id^\opp)].
\]
for $\la\in\Irr(\D)$ and $\mu\in\Irr(\C)$.
\end{proposition}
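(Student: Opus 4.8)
The plan is to reduce every assertion to a Frobenius-reciprocity computation of $\Hom$ spaces, using the explicit form $\Theta=\bigoplus_{\be\in\Irr(\C)}\be\otimes\be^\opp$ of the dual canonical endomorphism together with the product structure of the sector category of $M\otimes M^\opp$. The single identity I will invoke repeatedly is
\[
\dim\Hom(\iota_{LR}\xi,\iota_{LR}\xi')=\dim\Hom(\Theta\xi,\xi')
\]
for $M\otimes M^\opp$-$M\otimes M^\opp$ sectors $\xi,\xi'$, which follows from $\bar\iota_{LR}\iota_{LR}=\Theta$ and Frobenius reciprocity exactly as in \cite{I}.

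For part (1), I would first record that, since $M\otimes M^\opp$ is a tensor product of factors, its irreducible sectors are outer products of irreducible sectors of the two tensor factors, and two such products coincide if and only if the corresponding factors coincide. Hence $[\la_1\otimes\mu_1^\opp]=[\la_2\otimes\mu_2^\opp]$ forces $[\la_1]=[\la_2]$ in $\Irr(\D)$ and $[\mu_1]=[\mu_2]$ in $\Irr(\C)$, which is the claimed mutual inequivalence. The $M\otimes M^\opp$-$M\otimes M^\opp$ sectors attached to the subfactor are the subsectors of the powers of $\Theta$, hence all of the form $\la\otimes\rho^\opp$ with $\la,\rho\in\Irr(\C)$; since $\Irr(\C)\subseteq\Irr(\D)$, this is visibly a subset of the listed family.

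For part (2) I would run the same computation in three cases. First, for $\la,\la'\in\Irr(\D)$,
\[
\dim\Hom(\iota_{LR}(\la\otimes\id^\opp),\iota_{LR}(\la'\otimes\id^\opp))
=\sum_{\be\in\Irr(\C)}\dim\Hom(\be\la,\la')\,\dim\Hom(\be^\opp,\id^\opp),
\]
where the second factor is nonzero only for $\be=\id$, leaving $\dim\Hom(\la,\la')=\de_{\la,\la'}$; this yields irreducibility and mutual inequivalence. Second, taking the target to be $\id\otimes\bar\la^\opp$ with $\la\in\Irr(\C)$, both tensor factors force $\be=\bar\la$ and the space is one-dimensional, giving the identity $[\iota_{LR}(\la\otimes\id^\opp)]=[\iota_{LR}(\id\otimes\bar\la^\opp)]$. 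Third, for $\la\in\Irr(\D),\mu\in\Irr(\C)$ and target $\nu\otimes\id^\opp$, the factor $\dim\Hom((\be\mu)^\opp,\id^\opp)$ forces $\be=\bar\mu$, and the surviving factor is $\dim\Hom(\bar\mu\la,\nu)=N_{\bar\mu,\la}^\nu$, supplying the multiplicities in the decomposition.

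It remains to see that $[\iota_{LR}(\la\otimes\mu^\opp)]$ has no irreducible summand outside the family $\{\iota_{LR}(\nu\otimes\id^\opp)\}$, and this I would settle by a dimension count: the multiplicities just found already account for $d(\iota_{LR})\sum_\nu N_{\bar\mu,\la}^\nu d(\nu)=d(\iota_{LR})\,d(\bar\mu\la)=d(\iota_{LR}(\la\otimes\mu^\opp))$, so positivity of the statistical dimensions forces the complement to vanish and the decomposition is exactly as stated. The only real obstacle is bookkeeping with the opposite-algebra conventions---tracking $j$, the identity $(\be\mu)^\opp=\be^\opp\mu^\opp$, and where a conjugate $\bar\mu$ rather than $\mu$ appears---rather than any conceptual difficulty; once these $\Hom$-space identities are correctly set up, the conclusions follow as in \cite{I}.
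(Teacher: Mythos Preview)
Your proposal is correct and is exactly the approach the paper has in mind: the paper does not spell out an argument but simply notes that the proposition ``is a direct analogue of \cite[Theorem 4.1]{I} and can be proved in the same way,'' and Izumi's proof there is precisely the Frobenius-reciprocity computation of $\Hom$-spaces via $\bar\iota_{LR}\iota_{LR}=\Theta$ that you carry out. Your dimension-count closure of the decomposition and your observation that equality of statistical dimensions upgrades the one-dimensional $\Hom$ space to an equivalence are the standard moves, so nothing is missing.
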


Statement (1) above holds also true for $\mu\in\Irr(\D)$, but it
is important to  consider only $\mu\in\Irr(\C)$ for (2).

For a half-braiding $\E_\si$ of an object $\si$ in $\D$,
we set as follows as in \cite[page 139]{I}.

\begin{align*}
U(\si,\E_\si)&=\sum_{\la\in\Irr(\C)} W_\la(\E_\si(\la)\otimes1)
(\si\otimes\id^\opp)(W^*_\la),\\
U(\si^\opp,\E_\si)&=\sum_{\la\in\Irr(\C)} W_\la(1\otimes j((\E_\si(\la)))
(\id\otimes\si^\opp)(W^*_\la),
\end{align*}
where $\{W_\la\}_{\la\in\Irr(\C)}$ is a set of isometries in
$M\otimes M^\opp$ with $\sum_{\la\in\Irr(\C)}W_\la W_\la^*=1$
and $W_\la^*V\in(\iota_{LR},\iota_{LR}\cdot(\la\otimes\la^\opp))$.

We then define an $\eta$-extension of $\si\otimes\id$, an
endomorphism of $M\otimes M^\opp$, to $R$ as follows as in
\cite[Definition 2.3]{BEK3}.
\begin{align*}
\eta(\si,\E_\si)(a)&=(\si\otimes\id)(a),\quad a\in M\otimes M^\opp,\\
\eta(\si,\E_\si)(V)&=U(\si,\E_\si)^*V.
\end{align*}
This is indeed an endomorphism of $R$, which can be shown as in
\cite[Defition 4.4 (i)]{I}.  We also define $\eta^\opp(\si,\E_\si)$,
an extension of $\id\otimes\si^\opp$ to $R$ in a similar way.

\begin{theorem}
\label{qdim}
The category $\C'\cap\D$ is equivalent to
to the category of $R$-$R$ morphisms arising from decompositions
of $[\iota_{LR}][\lambda\otimes\id^\opp][\bar\iota_{LR}]$ where
$\la\in\Irr(\D)$.  We have $\dim(\C'\cap\D)=\dim \C\cdot\dim \D$.
\end{theorem}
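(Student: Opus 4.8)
The plan is to establish the equivalence of categories by exhibiting a functor, identifying it with the known structure, and then deduce the dimension formula from the relative tube algebra results already proved in Section \ref{tube}. Let me think about how to construct the equivalence first.

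**Constructing the functor.** The natural map sends a pair $(\si,\E_\si)$ — an object of $\C'\cap\D$ — to the $R$-$R$ endomorphism $\eta(\si,\E_\si)$ defined just above via the $\eta$-extension. First I would verify this is well-defined on objects: the $\eta$-extension uses the half-braiding data $\E_\si$ precisely to extend $\si\otimes\id$ across the Longo-Rehren inclusion, and the braiding-fusion equation (Definition 2.1(2)) is exactly the cocycle-type condition needed for $\eta(\si,\E_\si)(V)=U(\si,\E_\si)^*V$ to be consistent with $Vx=\iota_{LR}\cdot\bar\iota_{LR}(x)V$. This is the content cited from \cite[Definition 4.4(i)]{I}. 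Next I would check functoriality on morphisms: an intertwiner $X\in\Hom(\si,\si')$ satisfying $\E_{\si'}(\be)X=\be(X)\E_\si(\be)$ for all $\be$ should map to an intertwiner in $\Hom(\eta(\si,\E_\si),\eta(\si',\E_{\si'}))$ of $R$-$R$ morphisms. The intertwining condition on $X$ is again designed to commute past $V$.

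**Identifying the image with the stated decompositions.** The claim is that the image consists exactly of the $R$-$R$ morphisms appearing in decompositions of $[\iota_{LR}][\la\otimes\id^\opp][\bar\iota_{LR}]$ for $\la\in\Irr(\D)$. I would argue that $[\eta(\si,\E_\si)]$ is a subsector of $[\iota_{LR}][\si\otimes\id^\opp][\bar\iota_{LR}]$ — restricting $\eta(\si,\E_\si)$ along $\bar\iota_{LR}$ recovers $\si\otimes\id^\opp$ up to the sectors in $\Theta$ — and conversely that every irreducible summand of $[\iota_{LR}][\la\otimes\id^\opp][\bar\iota_{LR}]$ carries a half-braiding, so arises this way. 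The faithfulness and fullness of the functor then follow by matching Frobenius reciprocity on both sides against the half-braiding intertwiner spaces; this is the direct analogue of \cite[Theorem 4.1]{I} and \cite[Theorem 4.6]{I}.

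**The dimension formula.** Once the equivalence is in place, $\dim(\C'\cap\D)$ equals the sum $\sum_{\E^\a_\si\in\Irr(\C'\cap\D)} d(\E^\a_\si)^2$, since the dimension of a fusion category is the square sum of dimensions of its simple objects. But Lemma (1) in Section \ref{tube} states precisely that $\sum_{\E^\a_\si\in\Irr(\C,\D)} d(\E^\a_\si)^2=\dim\C\cdot\dim\D$. (One should confirm that the dimension $d(\E^\a_\si)$ of a simple object in $\C'\cap\D$ agrees with the dimension $d(\si)$ used in that lemma, which follows because the $\eta$-extension preserves the index/dimension of the underlying sector.) Thus $\dim(\C'\cap\D)=\dim\C\cdot\dim\D$ is immediate.

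**Main obstacle.** I expect the hardest part to be proving \emph{essential surjectivity and fullness together} — that is, showing every irreducible $R$-$R$ summand of $[\iota_{LR}][\la\otimes\id^\opp][\bar\iota_{LR}]$ actually admits a half-braiding with respect to $\C$ (not just with respect to all of $\D$), and that the functor induces an isomorphism on Hom-spaces. The restriction to $\C$ rather than $\D$ is exactly the subtle point flagged in the remark after Proposition \ref{pro} (``it is important to consider only $\mu\in\Irr(\C)$ for (2)''), so care is needed to ensure the half-braiding one extracts is indexed by $\be\in\Irr(\C)$ and satisfies the braiding-fusion equation only over $\C$. Everything else is routine once the functor is shown to be well-defined, and the dimension count then drops out of the tube-algebra lemma with no further work.
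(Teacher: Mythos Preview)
Your plan for the equivalence of categories matches the paper's proof: build the functor $(\si,\E_\si)\mapsto\eta(\si,\E_\si)$, note that $[\eta(\si,\E_\si)\cdot\iota_{LR}]=[\iota_{LR}(\si\otimes\id^\opp)]$ so the image lands in the stated decompositions, and for essential surjectivity use Proposition~\ref{pro}(2) to see that any irreducible $\rho\prec[\iota_{LR}][\xi\otimes\id^\opp][\bar\iota_{LR}]$ satisfies $[\rho\cdot\iota_{LR}]=[\iota_{LR}(\si\otimes\id^\opp)]$ for some $\si\in\D$, hence is an extension of $\si\otimes\id^\opp$; the half-braiding is then read off from the action on $V$, and it is automatically indexed by $\Irr(\C)$ because $\Theta=\bigoplus_{\la\in\Irr(\C)}\la\otimes\la^\opp$. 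Your identification of the ``main obstacle'' and the reason the half-braiding lives over $\C$ is exactly right.

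For the dimension formula you take a genuinely different route. You invoke Lemma~2.5(1) from Section~\ref{tube}, which already gives $\sum_{\E^\a_\si}d(\E^\a_\si)^2=\dim\C\cdot\dim\D$; since this sum is $\dim(\C'\cap\D)$ by definition, you are done, and the equivalence of categories is not even needed for this half of the statement. The paper instead \emph{uses} the equivalence just established: it identifies $\dim(\C'\cap\D)$ with the global dimension of the system of irreducible $R$-$R$ sectors in question, and then observes (by the standard Morita/Frobenius argument for the subfactor $M\otimes M^\opp\subset R$) that this equals the global dimension of the dual system of $M\otimes M^\opp$-$M\otimes M^\opp$ sectors $\la\otimes\mu^\opp$ with $\la\in\Irr(\D)$, $\mu\in\Irr(\C)$, namely $\dim\D\cdot\dim\C$. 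Your argument is shorter and stays within the tube-algebra framework; the paper's argument is self-contained within Section~3 and furnishes an independent subfactor-theoretic derivation of the same identity. One small remark: your parenthetical about $d(\E^\a_\si)=d(\si)$ does not require the $\eta$-extension at all---it is immediate from the fact that the forgetful functor $\C'\cap\D\to\D$ is a tensor functor.
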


\begin{proof}
If we have an irreducible half-braiding $\E_\si$
of an object $\si$ in $\D$ with
respect to $\C$, we have an extension $\eta(\si,\E_\si)$.  By definition,
we have $[\eta(\si,\E_\si)\cdot\iota_{LR}]=
[\iota_{LR}(\si\otimes\id^\opp)]$, so the extension
$\eta(\si,\E_\si)$ appears in the decomposition of
$[\iota_{LR}][\si\otimes\id^\opp][\bar\iota_{LR}]$.

Suppose an irreducible endomorphism $\rho$ of $R$ appears in
the decomposition of 
$[\iota_{LR}][\xi\otimes\id^\opp][\bar\iota_{LR}]$ for some
irreducible object $\xi\in\D$.  Then $\rho\cdot\iota_{LR}$
is contained in
\[ [\iota_{LR}][\xi\otimes\id^\opp][\bar\iota_{LR}][\iota_{LR}]
=\bigoplus_{\la\in\Irr(\C)}[\iota_{LR}][\xi\cdot\la\otimes\la^\opp],
\]
and Proposition \ref{pro} (2) shows that there exists an
object $\si\in\D$ satisfying
$[\rho\cdot\iota_{LR}]=
[\iota_{LR}(\si\otimes\id^\opp)]$, which means $\rho$
is an extension of $\si\otimes\id^\opp$ as an endomorphism.
We then have the following for some $U\in M\otimes M^\opp$.
\begin{align*}
\rho(a)&=(\si\otimes\id)(a),\quad a\in M\otimes M^\opp,\\
\rho(V)&=U^*V.
\end{align*}
We then have $U\in\Hom((\si\otimes\id^\opp)\cdot\Theta,
\Theta\cdot(\si\otimes\id^\opp))$ by a similar argument 
to the one in the middle of \cite[Page 141]{I}.  A further
argument similar to the one in \cite[Pages 141--143]{I}
shows that $U$ is of the form $U(\si,\E_\si)$ for some
half-braiding $\E_\si$ of $\si$ with respect to $\si$.

For the conjugate half-braiding, we have
$[\overline{\eta(\si,\E^\a_\si)}]=
[\eta(\bar\si,\E^{\bar\a}_{\bar\si})]$
as in \cite[Theorem 4.6 (iv)]{I}.

It is easy to see that the above correspondence indeed
gives equivalence of the two categories.

By what we have proved so far, $\dim(\C'\cap\D)$ is equal to
the square sum of the dimensions
of the irreducible $R$-$R$ sectors arising from decompositions
of $[\iota_{LR}][\lambda\otimes\id^\opp][\bar\iota_{LR}]$ where
$\la\in\Irr(\D)$.  The latter is then equal to
the square sum of the dimensions
of the irreducible $M\otimes M^\opp$-$M\otimes M^\opp$
sectors $\lambda\otimes\mu^\opp$ where
$\la\in\Irr(\D)$ and $\mu\in\Irr(\C)$, so we have the 
conclusion. 
\end{proof}

\section{The relative Drinfeld commutants arising from
$\alpha$-induction}

Now we change the notations and
let $\C$ be a modular tensor category realized as a full
subcategory of $\End(N)$ for a type III factor $N$.
Let $(\th,w,x)$ be a $Q$-system with $\th$ being an object in $\C$,
$M\supset N$ the corresponding subfactor and $\iota$ the inclusion
map $N\hookrightarrow M$.  We have $\alpha$-induction
$\a^\pm_\la$ for an object $\la$ in $\C$ as in
\cite{LR}, \cite{X}, \cite{BE1}, \cite{BE2}, \cite{BE3}, 
\cite{BEK1}, \cite{BEK2}, \cite{BEK3}.
Set $\D^\pm$ to be the fusion category generated by 
$\a_\la^\pm$ where $\la$ is an object of $\C$.
Set $\D$ to be the fusion category generated by
$\D^+$ and $\D^-$, and set $\D^0$ to be the fusion
category whose set of objects consists of those
which are objects of both $\D^+$ and $\D^-$.
Note that $\D$ is equal to the category generated by
$\iota\cdot\la\cdot\bar\iota$ for $\la\in\C$ by
\cite[Theorem 5.10]{BEK1}.
(The objects of $\D^0$ have been called ambichiral 
in \cite{BEK2}, and they correspond to
dyslectic/local modules in the terminology of
\cite{DMNO}, \cite{DNO}.)

The following result has been shown in \cite[Corollary 4.8]{BEK3}.
(Also see \cite[Corollary 3.30]{DMNO}, where unitarity is not
assumed.)
See \cite[Lemma 3.20, Remark 4.17]{BKLR} for an opposite braided
category.  Here $(\D^0)^\opp$ means a modular tensor category
with its braiding reversed.

\begin{theorem}
The Drinfeld center $(\D^+)'\cap \D^+$ of $\D^+$ is
equivalent to $\C\boxtimes (\D^0)^\opp$ as modular
tensor categories.
\end{theorem}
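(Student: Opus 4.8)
The plan is to read $(\D^+)'\cap\D^+$ as the full Drinfeld center $Z(\D^+)$, i.e. the case $\C=\D=\D^+$ of the relative commutant, which is automatically a modular tensor category. I would then produce a braided embedding of $\C$ into $Z(\D^+)$ whose image is a modular fusion subcategory, apply M\"uger's factorization theorem to split off its centralizer, and identify that centralizer with $(\D^0)^\opp$. The global dimension is controlled throughout by Theorem \ref{qdim}.

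First I would construct a braided tensor functor $F\colon\C\to Z(\D^+)$. For $\la\in\C$ the object $\a^+_\la$ lies in $\D^+$, and the $\a^+$-induced braiding operators transported from the braiding of $\C$ (\cite{BE3}, \cite{BEK2}) provide unitaries in $\Hom(\a^+_\la\be,\be\a^+_\la)$ for every simple $\be\in\D^+$. I would take these to be the half-braiding $\E_{\a^+_\la}(\be)$: the braiding--fusion equation is exactly the image under $\a^+$ of the hexagon identity of $\C$, and compatibility in $\be$ follows from functoriality of $\a^+$-induction, so $F(\la)=(\a^+_\la,\E_{\a^+_\la})$ is an object of $Z(\D^+)$ and $F$ is a braided functor reproducing the braiding of $\C$ un-reversed.

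The key point is that $F$ is fully faithful. Although $\a^+$-induction is neither full nor injective on isomorphism classes --- distinct $\la$ can give isomorphic $\a^+_\la$, and $\Hom(\a^+_\la,\a^+_\mu)$ is strictly larger than $\Hom(\la,\mu)$ in general --- the half-braiding rigidifies everything: a morphism of $\D^+$ intertwining the two half-braidings must come from $\C$, and objects merged by $\a^+$ carry genuinely different half-braidings and so become non-isomorphic in the center. This is where non-degeneracy of the braiding of $\C$ (modularity) is used decisively, exactly as in the \'etale-algebra computation of \cite{DMNO}; I expect this full-faithfulness statement to be the main obstacle, and I would prove it either by a direct Frobenius-reciprocity Hom-space count or by translating it into the statement that the minimal central projections $z(\E^\a_{\a^+_\la})$ of $\Tube(\D^+,\D^+)$ coming from $F$ are distinct, using the matrix-unit description established above. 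Since $\C$ is modular and $F$ is a fully faithful braided functor, the image $F(\C)$ is a modular fusion subcategory of $Z(\D^+)$.

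Finally I would apply M\"uger's theorem: a modular subcategory $F(\C)$ of the modular category $Z(\D^+)$ splits it as $Z(\D^+)\isom F(\C)\boxtimes F(\C)'\isom\C\boxtimes F(\C)'$, where the centralizer $F(\C)'$ is again modular and $\dim F(\C)'=\dim Z(\D^+)/\dim\C$. By the case $\C=\D=\D^+$ of Theorem \ref{qdim} we have $\dim Z(\D^+)=(\dim\D^+)^2$, while the dimension formulas for $\a$-induction \cite{BEK1}, \cite{BEK2} give $(\dim\D^+)^2=\dim\C\cdot\dim\D^0$, so $\dim F(\C)'=\dim\D^0$. It remains to identify $F(\C)'$ with $(\D^0)^\opp$: the ambichiral objects $\tau\in\D^0$, lying in both $\D^+$ and $\D^-$, carry a canonical half-braiding with all of $\D^+$ given by the relative braiding, defining a fully faithful braided functor $G\colon(\D^0)^\opp\to Z(\D^+)$; the reversal $\opp$ is forced by the conjugation in the center braiding (compare $\E^{\bar\a}_{\bar\si}(\be)=d(\si)R^*_\si(\E^\a_\si(\be)^*\be(\bar R_\si))$ above and \cite[Theorem 4.6 (iv)]{I}), and ambichirality makes $G$ land in the centralizer $F(\C)'$. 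As $G$ is fully faithful with $\dim G((\D^0)^\opp)=\dim\D^0=\dim F(\C)'$, it exhausts $F(\C)'$, giving $F(\C)'\isom(\D^0)^\opp$ as modular categories. Combining, $Z(\D^+)\isom\C\boxtimes(\D^0)^\opp$, and the braidings match by construction, which is the claim.
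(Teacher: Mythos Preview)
The paper does not prove this theorem; it merely records it as \cite[Corollary 4.8]{BEK3} (and \cite[Corollary 3.30]{DMNO}). The method of \cite{BEK3}, which the paper then imitates for the genuinely new relative commutants $(\D^+)'\cap\D$, $(\D^0)'\cap\D^+$, $(\D^0)'\cap\D$, is different from yours: one writes down the two explicit families of half-braidings $\E^+_{\a^+_\la}$ for $\la\in\Irr(\C)$ and $\E^-_\tau$ for $\tau\in\Irr(\D^0)$, passes to the $\eta$-extensions in the Longo--Rehren subfactor, and directly computes all the Hom spaces between products $\eta(\a^+_\la,\E^+)\eta(\tau,\E^-)$ to see that they are mutually inequivalent irreducibles; a global-dimension count (the $\C=\D=\D^+$ case of Theorem~\ref{qdim}) then shows these exhaust $\Irr(Z(\D^+))$, and the braiding is read off from the explicit half-braidings. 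Your route---embed $\C$ fully faithfully as a modular subcategory, invoke M\"uger's factorisation $Z(\D^+)\simeq\C\boxtimes\C'$, and identify the centraliser by a second embedding plus dimension---is the \cite{DMNO}-style argument. It is more structural and avoids the case-by-case Hom computations; the \cite{BEK3} approach, by contrast, gives an explicit parametrisation of $\Irr(Z(\D^+))$ and matrix units in the tube algebra, which is what the paper actually needs downstream.

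Your outline is essentially sound, but two points deserve care. First, the full faithfulness of $F$ is exactly \cite[Lemma~4.2]{BEK3} (quoted here as Proposition~4.5 in the $\D$-version), and its proof really does use non-degeneracy of $\C$; your two suggested proofs both work, but you should be aware that this is not a formality. Second, your justification for the braiding reversal on $\D^0$ is off: the appearance of $(\D^0)^\opp$ has nothing to do with the conjugate-half-braiding formula $\E^{\bar\a}_{\bar\si}(\be)=d(\si)R^*_\si(\E^\a_\si(\be)^*\be(\bar R_\si))$. It comes from the fact that $G$ equips $\tau\in\D^0$ with the half-braiding $\E^-_\tau(\be)=\E_r(\be,\tau)^*$, i.e.\ the \emph{inverse} of the relative braiding, so that when one computes the canonical braiding of the Drinfeld center on $G(\tau)\otimes G(\tau')$ one obtains the opposite of the intrinsic braiding of $\D^0$. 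You should also spell out why $G$ lands in the centraliser $F(\C)'$: this is the statement that $\E^+_{\a^+_\la}$ and $\E^-_\tau$ mutually trivialise, which follows from the compatibility of the relative braiding with $\a^\pm$-induction (\cite[Proposition~3.12]{BE3}), not something automatic.
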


Now $\D^0$ is a full fusion subcategory of $\D^\pm$ and
$\D^\pm$ is a full fusion subcategory of $\D$.
We first compute $(\D^+)'\cap \D$ explicitly.
We use a relative braiding introduced in \cite[Proposition 3.12]{BE3}.
We remark that the arguments there use only the braided structure of
$\C$ and do not depend on a net structure or
locality (as noted in \cite[page 739]{BEK2}.)

For an object $\be$ in $\D$, we choose an isometry
$T\in\Hom(\be,\a^+_\nu \a^-_{\nu'})$ with some objects $\nu,\nu'$
in $\C$.  For any object $\la\in\C$, we set
\[
\E^+_{\a^+_\la}(\be)=T^*\e^+(\la,\nu\nu')\a^+_\la(T),
\]
as in \cite[(10)]{BEK3}.  (We have changed the notations slightly
here from those in \cite{BEK3}.)
By \cite[Lemma 3.1]{BEK3}, $\{\E^+_{\a^+_\la}(\be)\}$
gives a half-braiding of $\a^+_\la$ with respect to $\D$
and $\E^+_{\a^+_\la}(\be)$ does not depend on the choices of
$T$ and $\nu,\nu'$.  In particular, 
$\{\E^+_{\a^+_\la}(\be)\}_{\be\in \Irr(\D^+)}$ gives a half-braiding
of $\a^+_\la$ with respect to $\D^+$.
By \cite[Corollary 3.8]{BEK3}, we have the following.
(Note that \cite[Proposition 2.6]{BEK3} works here
since $\a^+_\la$ is an object of $\D^+$ rather than $\D$.)

\begin{proposition}
In the above setting,
we have $[\eta^\opp(\a_\la^+,\E_{\a^+_\la}^+)]=
[\eta(\a_{\bar \la}^+,\E_{\a^+_{\bar\la}}^+)]$.
\end{proposition}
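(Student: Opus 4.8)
The plan is to read the asserted equality as the relative, $\alpha$-induced instance of the Longo--Rehren principle that the \emph{right-charge} extension of a half-braided object agrees, as an $R$-$R$ sector, with the conjugate of its \emph{left-charge} extension. The decisive structural input is that $\a^+_{\bar\la}=\overline{\a^+_\la}$ again lies in $\D^+$, the subcategory with respect to which all half-braidings here are formed and which indexes the underlying Longo--Rehren $Q$-system $M\otimes M^\opp\subset R$. I would factor the proof into three links and compose them.

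The first link (A) is the identity $[\eta^\opp(\a^+_\la,\E^+_{\a^+_\la})]=[\overline{\eta(\a^+_\la,\E^+_{\a^+_\la})}]$. Its validity in the present setting rests on $\overline{\a^+_\la}\in\Irr(\D^+)$: only then does Proposition \ref{pro} (2) supply $[\iota_{LR}(\a^+_\la\otimes\id^\opp)]=[\iota_{LR}(\id\otimes(\a^+_{\bar\la})^\opp)]$, which matches the left and right charge slots and is exactly the place where the hypothesis ``$\a^+_\la$ is an object of $\D^+$'' enters. To establish (A) I would build the intertwining isometry $u\in R$ out of the standard solutions $R_{\a^+_\la},\bar R_{\a^+_\la}$ of the conjugate equations together with the anti-isomorphism $j$, and verify the intertwining relation separately on $M\otimes M^\opp$ and on the generator $V$, the latter reducing to an identity relating $U((\a^+_\la)^\opp,\E^+_{\a^+_\la})$ to the conjugate datum built from $U(\a^+_\la,\E^+_{\a^+_\la})$.

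The second link (B) is the conjugate-extension identity $[\overline{\eta(\a^+_\la,\E^+_{\a^+_\la})}]=[\eta(\a^+_{\bar\la},\overline{\E^+_{\a^+_\la}})]$, the relative analogue of \cite[Theorem 4.6 (iv)]{I} already used in the proof of Theorem \ref{qdim}; here $\overline{\E^+_{\a^+_\la}}$ is the conjugate half-braiding recalled in Section \ref{tube}. The third link (C) identifies it with the directly defined half-braiding, $\overline{\E^+_{\a^+_\la}}=\E^+_{\a^+_{\bar\la}}$ up to equivalence, where $\overline{\E^+_{\a^+_\la}}(\be)=d(\a^+_\la)R^*_{\a^+_\la}(\E^+_{\a^+_\la}(\be)^*\be(\bar R_{\a^+_\la}))$. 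This is a focused computation: inserting the defining formula $\E^+_{\a^+_\la}(\be)=T^*\e^+(\la,\nu\nu')\a^+_\la(T)$ with $T\in\Hom(\be,\a^+_\nu\a^-_{\nu'})$ and using naturality of the relative braiding $\e^+$ together with the braiding-fusion equation, one matches it against the relative braiding for $\bar\la$. This is the content of \cite[Corollary 3.8]{BEK3}, and \cite[Proposition 2.6]{BEK3} applies because $\a^+_\la$ is an object of $\D^+$ rather than of $\D$. Composing (A), (B) and (C) gives the assertion.

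I expect the main obstacle to be link (A): the opposite-versus-conjugate identification has to be produced by hand, and the bookkeeping forced by the anti-linearity of $j$ and by working with \emph{standard} rather than arbitrary solutions of the conjugate equations is delicate. Concretely, one must check that $u$ is a genuine isometry in $R$ and that the half-braiding it induces on the transported extension is \emph{exactly} the conjugate one, not merely equivalent to it up to an inner unitary; the leverage that makes this possible is Proposition \ref{pro} (2), whose use once more hinges on $\overline{\a^+_\la}\in\D^+$.
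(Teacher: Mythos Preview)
Your proposal is correct and follows exactly the route the paper takes: the paper simply invokes \cite[Corollary 3.8]{BEK3} together with the remark that \cite[Proposition 2.6]{BEK3} applies because $\a^+_\la$ lies in $\D^+$ rather than merely in $\D$, and your links (A)--(C) unpack precisely that argument. One minor bookkeeping point: your link (C), the identification $\overline{\E^+_{\a^+_\la}}=\E^+_{\a^+_{\bar\la}}$, is recorded by the paper as the \emph{next} proposition via \cite[Lemma 3.7]{BEK3}, so strictly speaking Corollary 3.8 packages (A) and (B) while Lemma 3.7 supplies (C).
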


Also, \cite[Lemma 3.7]{BEK3} gives the following about the
conjugate half-braiding $\bar\E^+_{\a^+_\la}(\be)$.

\begin{proposition}
We have $\bar\E^+_{\a^+_\la}(\be)=\E^+_{\a^+_\la}(\be)$.
\end{proposition}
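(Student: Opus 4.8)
The plan is to compute $\bar\E^+_{\a^+_\la}(\be)$ directly from the general formula $\E^{\bar\a}_{\bar\si}(\be)=d(\si)R^*_\si(\E^\a_\si(\be)^*\be(\bar R_\si))$ with $\si=\a^+_\la$, substitute the explicit relative-braiding expression $\E^+_{\a^+_\la}(\be)=T^*\e^+(\la,\nu\nu')\a^+_\la(T)$, and simplify until the $\e^+$-form reappears. Because $\E^+_{\a^+_\la}(\be)$ does not depend on the choice of $T\in\Hom(\be,\a^+_\nu\a^-_{\nu'})$ by \cite[Lemma 3.1]{BEK3}, I would fix one such isometry and keep it throughout, so that the only genuine manipulation is carried on the $\C$-side factor $\la$. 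I would also record that, since $\alpha$-induction is a unitary tensor functor acting as the identity on the underlying morphism spaces of $\C$, the conjugate morphisms $R_{\a^+_\la},\bar R_{\a^+_\la}$ for $\a^+_\la$ may be taken to be the conjugate morphisms $R_\la,\bar R_\la$ of $\la$ in the modular category $\C$, now regarded inside $M$; this is what lets me reduce everything to a computation in $\C$ together with the relative braiding.

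First I would substitute $\E^+_{\a^+_\la}(\be)^*=\a^+_\la(T^*)\e^+(\la,\nu\nu')^*T$ and pull the fixed isometry $T$ and its image out to the two ends, bracketing by $T^*$ on the left and the appropriate $\a^+_{\bar\la}$-image of $T$ on the right, so that the core of the expression becomes $d(\la)R^*_\la\e^+(\la,\nu\nu')^*\be(\bar R_\la)$. Next I would invoke the naturality of the relative braiding from \cite[Proposition 3.12]{BE3} to commute $\be(\bar R_\la)$ through the crossing and to contract it against $R_\la^*$, using the conjugate equations $R^*_\la R_\la=\bar R^*_\la\bar R_\la=d(\la)$ to absorb the scalar $d(\la)=d(\a^+_\la)$. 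The outcome of this contraction is again a single relative braiding, now for the conjugate object, so reassembling with the outer legs returns precisely an expression of the defining shape $T^*\e^+(\bar\la,\nu\nu')\a^+_{\bar\la}(T)$, i.e. the $\e^+$-half-braiding of the conjugate object, which is the content of the asserted equality.

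The step I expect to be the main obstacle is the contraction in the middle: one must check that feeding the conjugate morphisms $R_\la,\bar R_\la$ through $\e^+(\la,\nu\nu')^*$ reproduces a \emph{positive} relative braiding rather than its opposite. This is a purely graphical identity that has to be tracked crossing by crossing without an accidental reversal of handedness, and it is exactly the place where the compatibility of the relative braiding with duality in the braided category $\C$ enters. As a conceptual cross-check I would use the $\eta$-extension picture: the computation in the proof of Theorem \ref{qdim} gives $[\overline{\eta(\si,\E^\a_\si)}]=[\eta(\bar\si,\E^{\bar\a}_{\bar\si})]$, so identifying the conjugate of $\eta(\a^+_\la,\E^+_{\a^+_\la})$ with the extension attached to the $\e^+$-half-braiding of the conjugate object, and combining this with the previous proposition relating $\eta^\opp$ and $\eta$, pins down $\bar\E^+_{\a^+_\la}$ up to equivalence of half-braidings and confirms the direct computation.
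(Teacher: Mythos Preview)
Your approach is correct and is essentially the computation carried out in \cite[Lemma 3.7]{BEK3}, which is all the paper invokes here: plug the explicit relative-braiding form $\E^+_{\a^+_\la}(\be)=T^*\e^+(\la,\nu\nu')\a^+_\la(T)$ into the conjugate half-braiding formula, use that $R_{\a^+_\la},\bar R_{\a^+_\la}$ may be taken to be $R_\la,\bar R_\la\in N$, strip off the $T$-legs, and reduce to the standard compatibility of the braiding $\e^+$ of $\C$ with conjugates. One small correction: after factoring out $T$, the relevant identity lives entirely in $\C$ (the operator $\be(\bar R_\la)$ becomes $\a^+_\nu\a^-_{\nu'}(\bar R_\la)=\nu\nu'(\bar R_\la)$), so what you need is naturality and rigidity of the \emph{braiding} $\e^+$ in $\C$, not of the relative braiding of \cite[Proposition 3.12]{BE3}; the latter only enters through the definition of $\E^+_{\a^+_\la}(\be)$ via the isometry $T$.
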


We next follow the first paragraph of \cite[Section 4]{BEK3}.
Recall from \cite[Subsection 3.3]{BE3} that for
an object $\be_\pm$ in $\D^\pm$, the operators
\[
\E_r(\be_+,\be_-)=S^*\a_\mu^-(T)^*\e^+(\la,\mu)\a^+_\la(S)T
\] are unitaries in $\Hom(\be_+\be_-,\be_-\be_+)$
for objects $\la,\mu$ in $\C$ and isometries
$T\in \Hom(\be_+,\a_\la^+)$
and $S\in\Hom(\be_-,\a^-_\mu)$.  They do not depend on
the choices of $\la,\mu,S,T$.  They give a ``relative braiding''
between $\D^+$ and $\D^-$.
For an object $\tau$ in $\D^-$ and an object $\be$ in $\D^+$,
we put $\E^-_\tau(\be)=\E_r(\be,\tau)^*$, and this gives
a half-braiding $\{\E^-_\tau(\be)\}_{\be\in \Irr(\D^+)}$
of $\tau$ with respect to $\D^+$ by \cite[Lemma 4.1]{BEK3}.

The arguments similar to those
below \cite[(18)]{BEK3}  give the following.

\begin{proposition}
For $\tau,\tau'\in\Irr(\D^-)$, we have
$\Hom(\eta(\tau,\E^-_\tau),\eta(\tau',\E^-_{\tau'}))=
\de_{\tau,\tau'}\CC$.
\end{proposition}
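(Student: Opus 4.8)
The plan is to transport the question from $R$-$R$ intertwiners to intertwiners of half-braidings, where the answer is forced by simplicity. Invoking Theorem \ref{qdim} with $\D^+$ in the role of the distinguished subcategory and $\D$ in the role of the ambient category, the stated equivalence of categories is in particular fully faithful, so it identifies the $R$-$R$ space $\Hom(\eta(\tau,\E^-_\tau),\eta(\tau',\E^-_{\tau'}))$ with the intertwiner space between the objects $(\tau,\E^-_\tau)$ and $(\tau',\E^-_{\tau'})$ of $(\D^+)'\cap\D$. By the description of morphisms in Section \ref{tube}, this latter space consists precisely of those $X\in\Hom(\tau,\tau')$ satisfying $\E^-_{\tau'}(\be)X=\be(X)\E^-_\tau(\be)$ for every $\be\in\Irr(\D^+)$.

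First I would observe that, since $\D^-$ is a full fusion subcategory of $\D$, the objects $\tau,\tau'\in\Irr(\D^-)$ remain simple and mutually inequivalent when regarded in $\D$; hence $\Hom(\tau,\tau')=\de_{\tau,\tau'}\CC$ already at the level of the underlying category, before the braiding-fusion constraint is even imposed. For $\tau\neq\tau'$ this immediately forces $X=0$, while for $\tau=\tau'$ the only candidates are scalar multiples of $\id$, which satisfy the compatibility trivially. This yields $\Hom(\eta(\tau,\E^-_\tau),\eta(\tau',\E^-_{\tau'}))=\de_{\tau,\tau'}\CC$, establishing simultaneously the irreducibility of each $\eta(\tau,\E^-_\tau)$ and the pairwise inequivalence of the family.

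The step I expect to require the most care, and hence the main obstacle, is the verification that the equivalence of Theorem \ref{qdim} genuinely matches the $R$-$R$ hom spaces with the half-braiding hom spaces, rather than merely inducing a bijection on simple objects; one must also confirm that $(\tau,\E^-_\tau)$ really is an object of $(\D^+)'\cap\D$. The former is resolved by full faithfulness of any equivalence of unitary fusion categories, and the latter is exactly \cite[Lemma 4.1]{BEK3} as recalled above. With these in place the argument parallels the one below \cite[(18)]{BEK3}, but is noticeably simpler: no nontrivial input from the relative braiding $\E^-_\tau$ is needed, since both the vanishing and the one-dimensionality are dictated by simplicity of $\tau$ and $\tau'$ in $\D$ alone.
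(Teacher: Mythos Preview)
Your proposal is correct and is essentially the same approach as the paper's, which simply cites ``arguments similar to those below \cite[(18)]{BEK3}'': those arguments amount precisely to identifying $\Hom(\eta(\tau,\E^-_\tau),\eta(\tau',\E^-_{\tau'}))$ with the subspace of $\Hom(\tau,\tau')$ compatible with the half-braidings, after which simplicity of $\tau,\tau'$ in $\D$ finishes the computation. You have packaged this identification via the full faithfulness of the equivalence in Theorem~\ref{qdim}, whereas the paper and \cite{BEK3} carry it out directly at the level of the extension formulas, but the substance is the same.
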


Since we now assume $\C$ is a modular tensor category, 
\cite[Lemma 4.2]{BEK3} produces the following.

\begin{proposition}
For $\la,\mu\in\Irr(\C)$, we have
$\Hom(\eta(\a^+_\la,\E^+_{\a^+_\la}),\eta(\a^+_\mu,\E^+_{\a^+_\mu}))
=\de_{\la,\mu}\CC$.
\end{proposition}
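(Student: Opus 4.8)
The statement asserts that the half-braided objects $\eta(\a^+_\la,\E^+_{\a^+_\la})$ for distinct $\la,\mu\in\Irr(\C)$ are irreducible and mutually inequivalent as $R$-$R$ sectors, precisely because $\C$ is modular. The plan is to reduce the computation of $\Hom(\eta(\a^+_\la,\E^+_{\a^+_\la}),\eta(\a^+_\mu,\E^+_{\a^+_\mu}))$ to an intertwiner space at the level of half-braidings, and then to show that the nondegeneracy of the braiding on $\C$ forces this space to collapse to $\de_{\la,\mu}\CC$. By the description of intertwiners in the relative Drinfeld commutant given in Section~\ref{tube}, an intertwiner between the two half-braidings is an $X\in\Hom(\a^+_\la,\a^+_\mu)$ satisfying $\E^+_{\a^+_\mu}(\be)X=\be(X)\E^+_{\a^+_\la}(\be)$ for all $\be\in\Irr(\D^+)$. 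Since $\eta$ implements an equivalence of categories (Theorem~\ref{qdim}), computing the $R$-$R$ intertwiner space is the same as computing this space of half-braiding intertwiners.

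First I would unwind the defining formula $\E^+_{\a^+_\la}(\be)=T^*\e^+(\la,\nu\nu')\a^+_\la(T)$, specializing $\be$ to run over the chiral objects $\a^+_\kappa$ with $\kappa\in\Irr(\C)$. For these the half-braiding is governed directly by the braiding operators $\e^+(\la,\kappa)$ of $\C$, via the functoriality of $\a$-induction (the fact that $\a^+$ is a braided functor, so $\a^+_\la\a^+_\kappa=\a^+_{\la\kappa}$ carries the braiding along). The braiding-fusion equation then reduces the intertwining condition, for $X\in\Hom(\a^+_\la,\a^+_\mu)$, to a relation forcing $X$ to commute with the full system of braiding operators $\{\e^+(\la,\kappa)\}$. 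This is where the content of \cite[Lemma 4.2]{BEK3} enters: that lemma is the statement that, under the modularity hypothesis, the only intertwiners compatible with all chiral braidings are the scalars, and distinct $\la,\mu$ admit none. Concretely I would invoke it to identify the intertwiner space with the space of morphisms in $\C$ that are simultaneously fixed by all the monodromies, and nondegeneracy of the $S$-matrix kills everything off-diagonal.

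The main obstacle I anticipate is justifying that it suffices to test the intertwining condition only on the chiral objects $\be=\a^+_\kappa$, rather than on all $\be\in\Irr(\D^+)$. A priori the condition ranges over every simple object of $\D^+$, but $\D^+$ is generated by the $\a^+_\kappa$, so one must check that compatibility with the half-braiding on generators propagates to all objects. This is exactly what the braiding-fusion equation of Definition~2.2(2) guarantees: if $X$ intertwines on $\be_1,\be_2$ then it intertwines on every summand $\be_3$ of $\be_1\be_2$, because $\E^+_{\a^+_\la}(\be)$ is compatible with isometries $X\in\Hom(\be_3,\be_1\be_2)$. I would spell out this reduction first, citing that $\{\E^+_{\a^+_\la}(\be)\}$ is genuinely a half-braiding (established above via \cite[Lemma 3.1]{BEK3}), so the multiplicativity is available for free. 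Once the test set is reduced to generators, the remainder is the direct application of \cite[Lemma 4.2]{BEK3} together with the modularity of $\C$, and the $\de_{\la,\mu}\CC$ conclusion follows.
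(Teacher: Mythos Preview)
Your proposal is correct and follows essentially the same route as the paper, which simply invokes \cite[Lemma 4.2]{BEK3} together with the modularity assumption on $\C$. What you have sketched---passing to half-braiding intertwiners via Theorem~\ref{qdim}, reducing the test objects to the generators $\a^+_\kappa$ via the braiding-fusion equation, and then using nondegeneracy of the braiding---is precisely the content of the proof of that lemma in \cite{BEK3}, so there is no substantive difference in approach.
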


In the same way as in \cite[Lemma 4.3]{BEK3}, we have the following.
(Note that we assumed $\tau\in {}_M\X_M^0$ in \cite[Lemma 4.3]{BEK3}
while we have $\tau\in\Irr(\D^-)$ here, but in the proof of
\cite[Lemma 4.3]{BEK3}, we used only the condition 
$\tau\in {}_M\X_M^-$ in the fourth line of the proof.)

\begin{proposition}
For $\la\in\Irr(\C)$ and $\tau\in\Irr(\D^-)$, we have
$\Hom(\eta(\a^+_\la,\E^+_{\a^+_\la}),\eta(\tau,\E^-_\tau))=
\de_{\la,\id}\de_{\tau,\id}\CC$.
\end{proposition}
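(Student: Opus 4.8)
The plan is to pass from the $\eta$-extensions to their half-braidings by Theorem \ref{qdim} and then to test the resulting intertwiner relation only on the generators $\a^+_\mu$ of $\D^+$. Applying Theorem \ref{qdim} with the full subcategory $\D^+$ in the role of $\C$, the space $\Hom(\eta(\a^+_\la,\E^+_{\a^+_\la}),\eta(\tau,\E^-_\tau))$ is identified with the space of $X\in\Hom(\a^+_\la,\tau)$ satisfying
\[
\E^-_\tau(\be)X=\be(X)\E^+_{\a^+_\la}(\be)\quad\text{for every }\be\in\Irr(\D^+).
\]
Because $\D^+$ is generated by $\{\a^+_\mu\}_{\mu\in\Irr(\C)}$ and both $\E^-_\tau$ and $\E^+_{\a^+_\la}$ obey the braiding-fusion equation, this relation propagates through products and subobjects, so it is enough to impose it for $\be=\a^+_\mu$ with $\mu\in\Irr(\C)$. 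The two preceding propositions already tell us that each of $\eta(\a^+_\la,\E^+_{\a^+_\la})$ and $\eta(\tau,\E^-_\tau)$ is irreducible, so the $\Hom$-space in question has dimension $0$ or $1$; it remains only to decide when it is nonzero.

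Next I would substitute the explicit braiding formulas. Choosing $T=1$ in the definition of $\E^+_{\a^+_\la}$ gives $\E^+_{\a^+_\la}(\a^+_\mu)=\e^+(\la,\mu)$, while choosing an isometry $S\in\Hom(\tau,\a^-_b)$ for a suitable object $b$ in $\C$ and inserting it into the formula for the relative braiding gives $\E^-_\tau(\a^+_\mu)=\E_r(\a^+_\mu,\tau)^*=\a^+_\mu(S)^*\e^+(\mu,b)^*S$. With these, the relation above becomes
\[
\a^+_\mu(S)^*\e^+(\mu,b)^*SX=\a^+_\mu(X)\e^+(\la,\mu),
\]
an identity in $\Hom(\a^+_\la\a^+_\mu,\a^+_\mu\tau)$ which, writing $Y:=SX\in\Hom(\a^+_\la,\a^-_b)$, expresses a single compatibility between the two chiral braidings $\e^+(\la,\mu)$ and $\e^+(\mu,b)$ and the intertwiner $Y$, required to hold simultaneously for all $\mu\in\Irr(\C)$.

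From here the computation is the one carried out in \cite[Lemma 4.3]{BEK3}. The idea is that demanding the displayed identity for every $\mu\in\Irr(\C)$ forces $\la$ to have trivial monodromy with every object of $\C$; since $\C$ is a modular tensor category, nondegeneracy of its braiding then forces $\la=\id$. Once $\la=\id$ we have $\a^+_\la=\id$, whence $X\in\Hom(\id,\tau)$ is nonzero only for $\tau=\id$; in that single remaining case both half-braidings reduce to the trivial one on $\id$, the relation holds automatically, and $X$ runs over $\CC$. This gives exactly $\de_{\la,\id}\de_{\tau,\id}\CC$. Finally, as the remark preceding the statement records, $\tau$ enters only through the existence of the isometry $S\in\Hom(\tau,\a^-_b)$, i.e. through $\tau\in\Irr(\D^-)$, so no ambichirality is needed; this is the only point where the present argument departs from \cite[Lemma 4.3]{BEK3}.

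I expect the main obstacle to be this last step: converting the family of braiding identities into the monodromy-nondegeneracy conclusion is the genuinely modular ingredient, and one must keep careful track of the projection $SS^*$ coming from $S$ not being invertible and of the conventions relating the half-braiding of $\a^+_\la$, which is originally defined with respect to $\D$ and then restricted to $\D^+$, to the half-braiding $\E^-_\tau$ defined directly with respect to $\D^+$.
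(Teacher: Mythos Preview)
Your proposal is correct and follows essentially the same approach as the paper: both reduce the computation to the argument of \cite[Lemma 4.3]{BEK3} and make the identical observation that the proof there uses only $\tau\in\Irr(\D^-)$ rather than the stronger ambichirality hypothesis. Your write-up simply unpacks more of the intermediate steps (the passage to intertwiners of half-braidings via the category equivalence, the reduction to the generators $\a^+_\mu$, and the explicit braiding substitutions), but the substance and the key modular step are the same.
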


Now \cite[Lemma 4.4]{BEK3} gives the following about the
conjugate half-braiding.

\begin{proposition}
We have $\bar\E^-_\tau(\be)=\E^-_\tau(\be)$ for objects $\be$ in
$\D^+$ and $\tau$ in $\D^-$.
\end{proposition}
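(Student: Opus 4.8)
The plan is to imitate the proof of the preceding proposition, which treats $\E^+_{\a^+_\la}$ via \cite[Lemma 3.7]{BEK3}, replacing that half-braiding by the relative-braiding half-braiding $\E^-_\tau(\be)=\E_r(\be,\tau)^*$ and reading off the outcome from \cite[Lemma 4.4]{BEK3}. First I would unwind the definition of the conjugate half-braiding recorded in Section~\ref{tube}:
\[
\bar\E^-_\tau(\be)=d(\tau)\,R^*_\tau\bigl(\E^-_\tau(\be)^*\,\be(\bar R_\tau)\bigr),
\]
where $R_\tau,\bar R_\tau$ are the standard solutions of the conjugate equations for $\tau$. Since $\E^-_\tau(\be)^*=\E_r(\be,\tau)$ by definition, the right-hand side is an explicit expression built solely from the relative braiding and the conjugate $R$-matrices of $\tau$.

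Next I would substitute the explicit formula $\E_r(\be,\tau)=S^*\a^-_\mu(T)^*\e^+(\la,\mu)\a^+_\la(S)T$ of \cite[Subsection 3.3]{BE3}, with isometries $T\in\Hom(\be,\a^+_\la)$ and $S\in\Hom(\tau,\a^-_\mu)$, and carry out the same manipulation as in \cite[Lemma 4.4]{BEK3}. The two ingredients are the naturality of the relative braiding, that is, the braiding--fusion equation satisfied by $\{\E^-_\tau(\be)\}_{\be}$, and the conjugate equations for $\tau$, which together allow one to slide $R_\tau$ and $\be(\bar R_\tau)$ past the relative braiding. Because $\E_r$ is independent of the choices of $\la,\mu,S,T$, the resulting expression is again independent of these choices and is once more of the relative-braiding form; this is precisely the asserted identity $\bar\E^-_\tau(\be)=\E^-_\tau(\be)$.

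The step I expect to be the main obstacle is the bookkeeping of the conjugate $R$-matrices as they are pushed through $\E_r$. Since $\tau$ lies in $\D^-$ while $\be$ lies in $\D^+$, the two objects interact only through the relative braiding built from the ambient braiding $\e^+$ of $\C$, and not through any genuine braiding of $\D$; one must therefore check that conjugation of $\tau$ is compatible with this relative-braiding structure and leaves no residual scalar. This compatibility is exactly what \cite[Lemma 4.4]{BEK3} isolates, just as \cite[Lemma 3.7]{BEK3} does in the $\E^+$ case treated above, so once it is invoked the remaining steps are the routine naturality computations and the proposition follows.
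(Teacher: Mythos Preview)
Your proposal is correct and follows essentially the same approach as the paper: the paper's proof consists solely of the citation to \cite[Lemma 4.4]{BEK3}, and your elaboration---unwinding the conjugate half-braiding formula and sliding the $R$-isometries through the relative braiding via naturality---is precisely the content of that lemma. No further argument beyond invoking it is required.
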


We then have the following as in \cite[Theorem 4.6]{BEK3}.

\begin{proposition}
For $\la,\la'\in\Irr(\C)$ and $\tau,\tau'\in\Irr(\D^-)$,
we have
\[
\lan\eta(\a^+_\la,\E^+_{\a^+_\la})\eta(\tau,\E^-_{\tau}),
\eta(\a^+_{\la'},\E^+_{\a^+_{\la'}})\eta(\tau',\E^-_{\tau'})\ran
=\de_{\la,\la'}\de_{\tau,\tau'}\CC.
\]
\end{proposition}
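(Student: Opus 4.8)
The plan is to reduce this four-fold inner product to the cross Hom-space computed in the previous proposition, using Frobenius reciprocity together with the monoidality of the $\eta$-construction. Write $A=\eta(\a^+_\la,\E^+_{\a^+_\la})$, $B=\eta(\tau,\E^-_\tau)$, and let $A',B'$ denote the corresponding endomorphisms attached to $\la',\tau'$. Since $\lan\rho,\rho'\ran=\dim\Hom(\rho,\rho')$ and every object here has a conjugate, Frobenius reciprocity gives
\[
\Hom(AB,A'B')\isom\Hom(\bar{A'}A,B'\bar B).
\]
First I would identify the conjugates. By the formula $[\overline{\eta(\si,\E^\a_\si)}]=[\eta(\bar\si,\E^{\bar\a}_{\bar\si})]$ recorded in the proof of Theorem \ref{qdim}, together with the self-conjugacy of the half-braidings $\E^+$ and $\E^-$ established in the propositions above, the conjugates stay inside their respective families: $\bar{A'}=\eta(\a^+_{\bar\la'},\E^+_{\a^+_{\bar\la'}})$ and $\bar B=\eta(\bar\tau,\E^-_{\bar\tau})$, where $\overline{\a^+_{\la'}}=\a^+_{\bar\la'}$ and $\bar\tau\in\D^-$.

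Next I would decompose the two composites into irreducibles. Because $\eta$ is compatible with fusion products (a product of $\eta$-extensions corresponds to the fused half-braiding $\{\E_\si(\be)\si(\E_{\si'}(\be))\}_{\be\in\Irr(\D^+)}$, and for the $+$-family and the $\tau$-family these fused half-braidings restrict to the canonical ones on each summand), I obtain
\[
\bar{A'}A=\bigoplus_{\kappa\in\Irr(\C)}N^\kappa_{\bar\la'\la}\,
\eta(\a^+_\kappa,\E^+_{\a^+_\kappa}),\qquad
B'\bar B=\bigoplus_{\pi\in\Irr(\D^-)}N^\pi_{\tau'\bar\tau}\,
\eta(\pi,\E^-_\pi),
\]
using $\a^+_{\bar\la'}\a^+_\la=\a^+_{\bar\la'\la}$ and that $\D^-$ is closed under products and conjugates. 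Substituting into the Hom-space and invoking the earlier proposition $\Hom(\eta(\a^+_\kappa,\E^+_{\a^+_\kappa}),\eta(\pi,\E^-_\pi))=\de_{\kappa,\id}\de_{\pi,\id}\CC$, only the term $\kappa=\pi=\id$ survives, leaving $N^\id_{\bar\la'\la}\,N^\id_{\tau'\bar\tau}=\de_{\la,\la'}\de_{\tau,\tau'}$, which is exactly the asserted value.

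The step I expect to be the main obstacle is justifying the decomposition rigorously: one must verify that the fusion product of the canonical half-braidings $\E^+$ (resp. $\E^-$) of two summands coincides with the canonical half-braiding of each irreducible constituent, equivalently that $\la\mapsto(\a^+_\la,\E^+_{\a^+_\la})$ and $\tau\mapsto(\tau,\E^-_\tau)$ are genuine tensor functors into $\C'\cap\D$ whose composite with $\eta$ is monoidal. This rests on the multiplicativity of the relative braiding $\e^+$ (and of the relative braiding $\E_r$) underlying the definitions, and is the one place where the explicit formulas for $\E^+_{\a^+_\la}$, $\E^-_\tau$, and $U(\si,\E_\si)$ must be used directly rather than a previously black-boxed lemma. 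Everything else is a formal manipulation combining Frobenius reciprocity with the three orthogonality propositions already proved.
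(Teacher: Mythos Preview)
Your proposal is correct and follows essentially the same route as the paper, which simply refers to \cite[Theorem 4.6]{BEK3}: Frobenius reciprocity to move $A'$ and $B$ to the other side, self-conjugacy of $\E^\pm$ to keep the conjugates in the same families, monoidality of $\la\mapsto\eta(\a^+_\la,\E^+_{\a^+_\la})$ and $\tau\mapsto\eta(\tau,\E^-_\tau)$ to decompose the products, and then the cross-orthogonality proposition $\Hom(\eta(\a^+_\kappa,\E^+_{\a^+_\kappa}),\eta(\pi,\E^-_\pi))=\de_{\kappa,\id}\de_{\pi,\id}\CC$. Your identification of the monoidality verification (multiplicativity of $\e^+$ and of the relative braiding $\E_r$) as the one substantive step is exactly right; this is what \cite[Lemma 4.5, proof of Theorem 4.6]{BEK3} handles.
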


Using Theorem \ref{qdim}, we know that the set 
$\{\eta^\opp(\a^+_\la,\E^+_{\a^+_\la})\eta(\tau,\E^-_\tau)\}$
with $\la\in\Irr(\C)$ and $\tau\in\Irr(\D^-)$ gives 
all representatives of the equivalence classes of
simple objects of $(\D^+)'\cap \D$.
We then have the following as a direct
analogue of \cite[Corollary 4.8]{BEK3} (along similar
arguments to those in \cite[page 18]{BEK3}, which
are based on \cite[Corollary 7.2]{I}).

\begin{theorem}
The fusion category $(\D^+)'\cap \D$ is equivalent to
$\C\boxtimes \D^-$.
\end{theorem}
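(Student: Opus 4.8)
The plan is to establish the claimed equivalence $(\D^+)'\cap\D\simeq\C\boxtimes\D^-$ as fusion categories by combining the explicit parametrization of simple objects with a computation of the fusion rules. By the remark just before the theorem, the set $\{\eta^\opp(\a^+_\la,\E^+_{\a^+_\la})\eta(\tau,\E^-_\tau)\}_{\la\in\Irr(\C),\,\tau\in\Irr(\D^-)}$ gives all representatives of simple objects of $(\D^+)'\cap\D$, and the preceding Proposition shows these are mutually inequivalent (the inner product is $\de_{\la,\la'}\de_{\tau,\tau'}\CC$). So first I would record that the map $(\la,\tau)\mapsto \eta^\opp(\a^+_\la,\E^+_{\a^+_\la})\eta(\tau,\E^-_\tau)$ sets up a bijection on the level of simple objects between $\Irr(\C)\times\Irr(\D^-)$ and $\Irr((\D^+)'\cap\D)$, which is exactly the simple-object set of $\C\boxtimes\D^-$.

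Next I would promote this bijection to a tensor functor. The natural candidate is $F(\la\boxtimes\tau)=\eta^\opp(\a^+_\la,\E^+_{\a^+_\la})\eta(\tau,\E^-_\tau)$, extended additively, and I would check that $F$ respects fusion. The key structural input is that the two families mutually commute up to a canonical isomorphism: the $\C$-factor is carried by the $+$-half-braiding of $\a^+_\la$ and the $\D^-$-factor by the relative-braiding half-braiding $\E^-_\tau$, and the relative braiding between $\D^+$ and $\D^-$ is precisely what makes these two tensor subfamilies independent. Concretely, I would verify that the fusion product of half-braidings, given earlier by $\{\E_\si(\be)\si(\E_{\si'}(\be))\}_\be$, factors so that $F((\la\boxtimes\tau)\otimes(\la'\boxtimes\tau'))\cong F(\la\la'\boxtimes\tau\tau')$, matching the fusion in $\C\boxtimes\D^-$; the dimension count $\dim((\D^+)'\cap\D)=\dim\C\cdot\dim\D^-$ from Theorem \ref{qdim} together with $\dim\D=\dim\D^+\cdot\dim\D^-/\dim\D^0$ and the Drinfeld-center computation confirms the object counts are consistent. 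As in the cited \cite[Corollary 4.8]{BEK3}, the argument here mirrors the $\a$-induction computation and ultimately rests on \cite[Corollary 7.2]{I}, which controls how the tube-algebra/half-braiding data assemble into a product category.

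The main obstacle, as in \cite[Corollary 4.8]{BEK3}, is showing that the tensor structure genuinely splits as a Deligne product rather than merely that the objects and their count match: I must show the hom-spaces and associativity constraints factor, so that $F$ is fully faithful and monoidal, not just bijective on simple objects. This is where the independence of the two half-braiding families must be used quantitatively, verifying that $\Hom$ between products of simples decomposes as a tensor product of the $\C$-side and $\D^-$-side hom-spaces; the earlier Propositions computing $\Hom(\eta(\a^+_\la,\E^+_{\a^+_\la}),\eta(\tau,\E^-_\tau))=\de_{\la,\id}\de_{\tau,\id}\CC$ and the combined inner-product formula are exactly the ingredients that make this factorization go through. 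Once full faithfulness and monoidality are in place, $F$ is an equivalence of fusion categories by the standard criterion, and since the construction follows \cite{BEK3} and \cite{I} line by line, I would present it as a direct analogue citing those sources rather than reproving the categorical bookkeeping.
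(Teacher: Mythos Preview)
Your proposal is correct and follows essentially the same route as the paper: parametrize the simple objects of $(\D^+)'\cap\D$ by $\Irr(\C)\times\Irr(\D^-)$ via the $\eta$-extensions, use Theorem~\ref{qdim} for exhaustion, and then invoke the arguments of \cite[page~18, Corollary~4.8]{BEK3} based on \cite[Corollary~7.2]{I} to obtain the Deligne-product splitting of the tensor structure. One small slip worth correcting: Theorem~\ref{qdim} yields $\dim((\D^+)'\cap\D)=\dim\D^+\cdot\dim\D$, not $\dim\C\cdot\dim\D^-$ directly---the equality of these two numbers is a \emph{consequence} of the exhaustion of simple objects (together with $d(\a^+_\la)=d(\la)$), not an independent input, so your appeal to auxiliary identities like $\dim\D=\dim\D^+\cdot\dim\D^-/\dim\D^0$ is unnecessary here.
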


\begin{remark}{\rm
Note that the above result has some formal
similarity to \cite[Theorem A]{H} in the appearance of $\D^-$.
}\end{remark}

We next consider $(\D^0)'\cap \D^+$.
In a way similar to the above, we have a half-braiding
$\{\E^+_\tau(\be)\}_{\be\in\Irr(\D^0)}$ with
respect to $\D^0$ for $\tau\in\Irr(\D^+)$.  We also have
a half-braiding $\{\E^-_\tau(\be)\}_{\be\in\Irr(\D^0)}$
with respect to $\D^0$ for $\tau\in\Irr(\D^0)$.
We similarly have the following.

\begin{proposition}The set 
$\{\eta(\tau,\E^+_\tau)\eta^\opp(\tau',\E^-_{\tau'})\}$
with $\tau\in\Irr(\D^+)$ and $\tau'\in\Irr(\D^0)$ gives 
all representatives of the equivalence classes of
simple objects of $(\D^0)'\cap \D^+$.
\end{proposition}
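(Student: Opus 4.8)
The plan is to imitate the three-step argument that produced the simple objects of $(\D^+)'\cap\D$ above, now with $\D^0$ in the role of the distinguished full subcategory and $\D^+$ in the role of the ambient category. First I would invoke Theorem \ref{qdim}: forming the Longo--Rehren subfactor $M\otimes M^\opp\subset R$ attached to $\Irr(\D^0)$, the category $(\D^0)'\cap\D^+$ is equivalent to the category of $R$-$R$ morphisms arising from the decompositions of $[\iota_{LR}][\la\otimes\id^\opp][\bar\iota_{LR}]$ with $\la\in\Irr(\D^+)$, and in particular $\dim((\D^0)'\cap\D^+)=\dim\D^0\cdot\dim\D^+$. The two families of half-braidings already introduced --- $\{\E^+_\tau(\be)\}_{\be\in\Irr(\D^0)}$ for $\tau\in\Irr(\D^+)$ coming from $\a^+$-induction, and $\{\E^-_{\tau'}(\be)\}_{\be\in\Irr(\D^0)}$ for $\tau'\in\Irr(\D^0)$ coming from the relative braiding of \cite{BE3} --- then yield, through the $\eta$-extension construction of the previous section, objects $\eta(\tau,\E^+_\tau)$ and $\eta^\opp(\tau',\E^-_{\tau'})$ of $(\D^0)'\cap\D^+$, and hence the products in the statement. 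This is the exact structural analogue of the computation giving $(\D^+)'\cap\D\cong\C\boxtimes\D^-$.

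The heart of the proof is the orthogonality relation
\[
\lan\eta(\tau,\E^+_\tau)\eta^\opp(\tau',\E^-_{\tau'}),
\eta(\si,\E^+_\si)\eta^\opp(\si',\E^-_{\si'})\ran
=\de_{\tau,\si}\de_{\tau',\si'}\CC
\]
for $\tau,\si\in\Irr(\D^+)$ and $\tau',\si'\in\Irr(\D^0)$, which at once gives irreducibility of each product and inequivalence of distinct ones. I would assemble it exactly as the combined orthogonality proposition for $(\D^+)'\cap\D$ was assembled. The within-family relations $\Hom(\eta(\tau,\E^+_\tau),\eta(\si,\E^+_\si))=\de_{\tau,\si}\CC$ and $\Hom(\eta^\opp(\tau',\E^-_{\tau'}),\eta^\opp(\si',\E^-_{\si'}))=\de_{\tau',\si'}\CC$ are immediate, since an intertwiner in the relative commutant sits inside the underlying intertwiner space and distinct irreducibles of $\D^+$ or of $\D^0$ have none. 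The essential ingredient is the cross relation, the analogue of \cite[Lemma 4.3]{BEK3}, asserting that an intertwiner from $\eta(\tau,\E^+_\tau)$ to $\eta^\opp(\tau',\E^-_{\tau'})$ must vanish unless $\tau$ and $\tau'$ are both trivial; feeding this into a Frobenius-reciprocity computation that uses the conjugate half-braidings $\bar\E^+=\E^+$ and $\bar\E^-=\E^-$ established in the propositions above collapses the inner product to the stated form.

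Granting the orthogonality, the proposition follows by counting dimensions. The products are mutually inequivalent simple objects, and because the fusion product of half-braidings multiplies underlying dimensions we have $d(\eta(\tau,\E^+_\tau)\eta^\opp(\tau',\E^-_{\tau'}))=d(\tau)d(\tau')$, whence
\[
\sum_{\tau\in\Irr(\D^+)}\sum_{\tau'\in\Irr(\D^0)}
\bigl(d(\tau)d(\tau')\bigr)^2
=\Bigl(\sum_{\tau\in\Irr(\D^+)}d(\tau)^2\Bigr)
\Bigl(\sum_{\tau'\in\Irr(\D^0)}d(\tau')^2\Bigr)
=\dim\D^+\cdot\dim\D^0 .
\]
By Theorem \ref{qdim} this equals $\dim((\D^0)'\cap\D^+)$, so a set of mutually inequivalent simple objects whose squared dimensions already exhaust the global dimension must be a complete list of representatives of $\Irr((\D^0)'\cap\D^+)$.

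I expect the cross relation to be the only real obstacle. As in the remark accompanying \cite[Lemma 4.3]{BEK3}, one has to check that the hypothesis imposed there on the second label is not genuinely needed --- that only membership in ${}_M\X_M^-$, here the condition $\tau'\in\Irr(\D^0)\subset\Irr(\D^-)$, actually enters --- so that the vanishing forced by the modularity of $\C$ (equivalently, the non-degeneracy of the relative braiding of \cite{BE3} between $\D^+$ and $\D^0$) survives the passage to the relative setting over $\D^0$.
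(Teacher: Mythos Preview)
Your proposal is correct and follows exactly the route the paper intends: the paper offers no proof beyond ``We similarly have the following,'' and what you have written is a faithful unpacking of that ``similarly'' --- Theorem~\ref{qdim} for the dimension count, the analogues of Propositions~4.4--4.8 for orthogonality, and the exhaustion by global dimension. One small remark: your caution about the cross relation is slightly misplaced in this particular case, since here $\tau'\in\Irr(\D^0)$ already satisfies the original hypothesis of \cite[Lemma~4.3]{BEK3}, so the weakening of that hypothesis (needed for Proposition~4.6 where $\tau\in\Irr(\D^-)$) is not required here.
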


We then have the following again along similar
arguments to those in \cite[page 18]{BEK3}.

\begin{theorem}
The fusion category $(\D^0)'\cap \D^+$ is equivalent to
$\D^+\boxtimes \D^0$.
\end{theorem}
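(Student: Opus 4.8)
The plan is to follow the template established by the preceding results in this section, transporting the computation of $(\D^+)'\cap\D$ down to the pair $(\D^0,\D^+)$. First I would invoke the Proposition immediately above, which already furnishes an explicit list of representatives of the simple objects of $(\D^0)'\cap\D^+$, namely the products $\eta(\tau,\E^+_\tau)\eta^\opp(\tau',\E^-_{\tau'})$ with $\tau\in\Irr(\D^+)$ and $\tau'\in\Irr(\D^0)$. The goal is then to show that the assignment $(\tau,\tau')\mapsto\eta(\tau,\E^+_\tau)\eta^\opp(\tau',\E^-_{\tau'})$ sets up an equivalence of fusion categories onto $\D^+\boxtimes\D^0$, where the first factor is indexed by $\tau\in\Irr(\D^+)$ and the second by $\tau'\in\Irr(\D^0)$.

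The key steps, in order, are the following. First I would establish the orthogonality and irreducibility relations for these objects, exactly paralleling the propositions proved for the case of $(\D^+)'\cap\D$: one needs an analogue of the statement $\Hom(\eta(\tau,\E^+_\tau),\eta(\tilde\tau,\E^+_{\tilde\tau}))=\de_{\tau,\tilde\tau}\CC$ on the $\D^+$-side, the corresponding statement on the $\D^0$-side, the cross-orthogonality $\Hom(\eta(\tau,\E^+_\tau),\eta^\opp(\tau',\E^-_{\tau'}))=\de_{\tau,\id}\de_{\tau',\id}\CC$, and finally the combined relation showing that $\lan\eta(\tau,\E^+_\tau)\eta^\opp(\tau',\E^-_{\tau'}),\eta(\tilde\tau,\E^+_{\tilde\tau})\eta^\opp(\tilde\tau',\E^-_{\tilde\tau'})\ran=\de_{\tau,\tilde\tau}\de_{\tau',\tilde\tau'}\CC$. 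These all follow from \cite[Lemmas 4.1--4.6]{BEK3} with $\D^0$ playing the role formerly played by $\D^-$, and with the relative braiding between $\D^+$ and $\D^0$ (a restriction of the relative braiding between $\D^+$ and $\D^-$, since $\D^0\subset\D^-$) supplying the required intertwiners. Second, since the half-braidings $\E^+_\tau$ and $\E^-_{\tau'}$ come from the $+$ and $-$ relative braidings respectively, the mutual commutation of the two families yields a braiding on the image that factors as the external product of the braiding on $\D^+$ and the braiding on $\D^0$, which is precisely the modular data of $\D^+\boxtimes\D^0$.

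Finally I would verify that the correspondence is compatible with fusion and with dimensions: the count $\dim((\D^0)'\cap\D^+)=\dim\D^0\cdot\dim\D^+=\dim(\D^+\boxtimes\D^0)$ follows from Theorem \ref{qdim} applied to the pair $(\D^0,\D^+)$, and the orthogonality relations above show the listed objects are mutually inequivalent and irreducible, so they exhaust $\Irr((\D^0)'\cap\D^+)$; matching fusion rules then proceeds as in \cite[page 18]{BEK3}, using \cite[Corollary 7.2]{I}. The main obstacle I anticipate is the cross-orthogonality step: here one cannot simply quote \cite[Lemma 4.3]{BEK3} verbatim, because that lemma compares a $+$-induced object against a $\D^-$ object, whereas now the second object lives in $\D^0$ and is extended via $\eta^\opp$ rather than $\eta$. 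One must check that the argument—which ultimately rests on the nondegeneracy of the braiding of $\C$ and on the fact that the only object common to the relevant $+$ and $-$ systems with trivial relative monodromy is the identity—still goes through when the comparison is against the \emph{ambichiral} subcategory $\D^0$. Since the objects of $\D^0$ are exactly those lying in both $\D^+$ and $\D^-$, I expect the same computation to apply once one observes that the relative braiding restricts appropriately, but this is the point where the details require genuine verification rather than a mechanical substitution of symbols.
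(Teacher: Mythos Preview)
Your proposal is correct and follows essentially the same route as the paper: the preceding Proposition already supplies the complete list of simple objects, and the equivalence with $\D^+\boxtimes\D^0$ is then obtained via the argument of \cite[page 18]{BEK3} using \cite[Corollary 7.2]{I}, exactly as you describe. One small caveat: in your second step you speak of ``the braiding on $\D^+$'' and the ``modular data of $\D^+\boxtimes\D^0$'', but $\D^+$ is in general only a fusion category, not a braided one, and the theorem asserts only an equivalence of fusion categories; drop that sentence and the remainder of your argument stands unchanged. Your worry about the cross-orthogonality step is also unnecessary here, since $\tau'\in\D^0$ is precisely the case in which the $\eta^\opp$--$\eta$ comparison \emph{is} available (cf.\ the paper's remark after the next Proposition that the difficulty arises only when $\tau'\notin\D^0$).
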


We next consider $(\D^0)'\cap \D$.
In a way similar to the above, we have a half-braiding
$\{\E^+_\tau(\be)\}_{\be\in\Irr(\D^0)}$ with
respect to $\D^0$ for $\tau\in\Irr(\D^+)$.  We also have
a half-braiding $\{\E^-_\tau(\be)\}_{\be\in\Irr(\D^0)}$
with respect to $\D^0$ for $\tau\in\Irr(\D^-)$.
Using Theorem \ref{qdim} and \cite[Theorem 4.2]{BEK2},
we have the following.

\begin{proposition}
The set 
$\{\eta(\tau,\E^+_{\tau})\eta(\tau',\E^-_{\tau'})\}$
with $\tau\in\Irr(\D^+)$ and $\tau'\in\Irr(\D^-)$ gives 
all representatives of the equivalence classes of
simple objects of $(\D^0)'\cap \D$.  
\end{proposition}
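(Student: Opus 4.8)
The plan is to show that the listed family is a complete and irredundant list of simple objects of $(\D^0)'\cap\D$: every $\eta(\tau,\E^+_\tau)\eta(\tau',\E^-_{\tau'})$ is simple, the members are pairwise inequivalent, and together they exhaust the simple objects. I would follow the template already used for $(\D^+)'\cap\D$ and for $(\D^0)'\cap\D^+$, the one genuine difference being that both half-braidings are now taken with respect to the smaller neutral category $\D^0$, so that every inner-product computation tests only against $\be\in\Irr(\D^0)$.

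First I would dispose of the individual extensions. Because each $\tau\in\Irr(\D^+)$ and each $\tau'\in\Irr(\D^-)$ is already simple in $\D$, the extensions $\eta(\tau,\E^+_\tau)$ and $\eta(\tau',\E^-_{\tau'})$ are simple in $(\D^0)'\cap\D$, of dimensions $d(\tau)$ and $d(\tau')$, and distinct $\tau$ (resp.\ $\tau'$) give inequivalent objects since their underlying objects in $\D$ are inequivalent. The real content is the product orthogonality. Using the conjugate half-braidings --- for which $\bar\E^-_{\tau'}=\E^-_{\tau'}$ and the analogous equality for $\E^+$ hold as recorded above --- together with Frobenius reciprocity in $(\D^0)'\cap\D$, I would rewrite
\[
\lan\eta(\tau,\E^+_\tau)\eta(\tau',\E^-_{\tau'}),\ \eta(\si,\E^+_\si)\eta(\si',\E^-_{\si'})\ran
\]
as an inner product between the purely $+$-type extension $\eta(\bar\si,\E^+_{\bar\si})\eta(\tau,\E^+_\tau)$ and a purely $-$-type extension. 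Decomposing each factor into simples and invoking the $\D^0$-analogue of the cross-vanishing statement (a $+$-type and a $-$-type extension of simples overlap only in the trivial sector, which uses the relative braiding between $\D^+$ and $\D^-$ and the modularity of $\C$), together with the within-family orthogonality, this should collapse to $\de_{\tau,\si}\de_{\tau',\si'}$. That single identity proves at once that each product is simple and that the family is irredundant.

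It then remains to count. The family contributes total squared dimension $\sum_{\tau,\tau'}d(\tau)^2 d(\tau')^2=\dim\D^+\cdot\dim\D^-$. On the other hand, Theorem \ref{qdim} applied to the full subcategory $\D^0$ of $\D$ gives $\dim((\D^0)'\cap\D)=\dim\D^0\cdot\dim\D$, i.e.\ the sum of the squared dimensions over \emph{all} simple objects of $(\D^0)'\cap\D$ equals $\dim\D^0\cdot\dim\D$. Since \cite[Theorem 4.2]{BEK2} supplies the dimension identity $\dim\D^+\cdot\dim\D^-=\dim\D^0\cdot\dim\D$, the mutually inequivalent simple objects already constructed exhaust the entire squared-dimension budget; hence they are all of the simple objects, as claimed.

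The hard part will be the cross-vanishing computation underlying the product orthogonality. In the previous two cases the half-braidings were defined with respect to $\D^+$, and the proofs could use features peculiar to that chiral system; here the test objects range only over $\Irr(\D^0)$, which changes which intertwiner spaces enter, and one must check that the relative braiding between $\D^+$ and $\D^-$ still forces the cross terms to collapse onto the trivial sector. Equivalently, the delicate point is to verify that the combined half-braiding carried by $\tau\tau'$ admits no nonscalar endomorphism in $(\D^0)'\cap\D$, so that the product is genuinely simple even when $\tau\tau'$ is reducible in $\D$. Once that is secured, the dimension identity from \cite[Theorem 4.2]{BEK2} closes the argument with no further work.
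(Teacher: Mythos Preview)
Your approach matches the paper's: it too derives the proposition from the orthogonality of the products (argued ``similarly'' to the earlier cases) together with the dimension count from Theorem~\ref{qdim} and the identity $\dim\D^+\cdot\dim\D^-=\dim\D^0\cdot\dim\D$ supplied by \cite[Theorem~4.2]{BEK2}. The cross-vanishing you flag as delicate does go through: an equivalence $\eta(\tau,\E^+_\tau)\cong\eta(\tau',\E^-_{\tau'})$ forces $\tau=\tau'\in\Irr(\D^0)$ and then $\E^+_\tau(\be)=\E^-_\tau(\be)$ for all $\be\in\Irr(\D^0)$, so $\tau$ lies in the M\"uger center of $\D^0$, which is trivial because $\D^0$ is modular (a known consequence of the modularity of $\C$).
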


However, we do not
know whether $\eta(\tau',\E^-_{\tau'})$ is equivalent
to $\eta^\opp(\bar\tau',\E^-_{\bar\tau'})$ since
$\tau'$ is not in $\D^0$ in general.  So we now need an 
extra argument.

\begin{proposition}\label{inter}
For $\tau\in\Irr(\D^+)$ and $\tau'\in\Irr(\D^-)$,
the relative braiding $\E_r(\tau,\tau')$ gives an element
in $\Hom(\eta(\tau,\E^+_{\tau})\eta(\tau',\E^-_{\tau'}),
\eta(\tau',\E^-_{\tau'})\eta(\tau,\E^+_{\tau})).$
\end{proposition}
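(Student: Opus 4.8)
The plan is to reduce the assertion to an identity among $M$-$M$ morphisms by means of Theorem \ref{qdim}. That theorem supplies an equivalence $(\si,\E_\si)\mapsto\eta(\si,\E_\si)$ between $(\D^0)'\cap\D$ and the category of $R$-$R$ morphisms coming from $[\iota_{LR}][\la\otimes\id^\opp][\bar\iota_{LR}]$; being an equivalence it is fully faithful, so a morphism of half-braidings, that is an $X\in\Hom(\si,\si')$ with $\E_{\si'}(\be)X=\be(X)\E_\si(\be)$ for all $\be\in\Irr(\D^0)$, is carried to the $R$-$R$ intertwiner given by $X$ (viewed in $R$ through $X\mapsto\iota_{LR}(X\otimes 1)$) between $\eta(\si,\E_\si)$ and $\eta(\si',\E_{\si'})$. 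I would apply this to $\si=\tau\tau'$ equipped with the fusion-product half-braiding $(\E^+_\tau*\E^-_{\tau'})(\be)=\E^+_\tau(\be)\tau(\E^-_{\tau'}(\be))$, whose $\eta$-extension is $\eta(\tau,\E^+_\tau)\eta(\tau',\E^-_{\tau'})$, and to $\si'=\tau'\tau$ equipped with $(\E^-_{\tau'}*\E^+_\tau)(\be)=\E^-_{\tau'}(\be)\tau'(\E^+_\tau(\be))$, whose $\eta$-extension is $\eta(\tau',\E^-_{\tau'})\eta(\tau,\E^+_\tau)$. Thus it suffices to show that $\E_r(\tau,\tau')\in\Hom(\tau\tau',\tau'\tau)$ satisfies, for every $\be\in\Irr(\D^0)$,
\[
\E^-_{\tau'}(\be)\,\tau'(\E^+_\tau(\be))\,\E_r(\tau,\tau')
=\be(\E_r(\tau,\tau'))\,\E^+_\tau(\be)\,\tau(\E^-_{\tau'}(\be)).
\]

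First I would rewrite this in terms of the relative braiding alone. For $\be\in\Irr(\D^0)$ the two half-braidings are, by construction, the restrictions to $\D^0$ of the relative braiding of \cite[Subsection 3.3]{BE3}: up to the adjoint convention that distinguishes the two chiralities one has $\E^+_\tau(\be)=\E_r(\tau,\be)$ and $\E^-_{\tau'}(\be)=\E_r(\be,\tau')^*$, these being legitimate since $\be$ is an object of both $\D^+$ and $\D^-$. Substituting, the displayed identity takes the shape
\[
\E_r(\be,\tau')^*\,\tau'(\E_r(\tau,\be))\,\E_r(\tau,\tau')
=\be(\E_r(\tau,\tau'))\,\E_r(\tau,\be)\,\tau(\E_r(\be,\tau')^*),
\]
a relation purely among values of $\E_r$ on the triple $(\tau,\tau',\be)$. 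I would verify it by expanding each relative braiding through its defining data — isometries $T\in\Hom(\tau,\a^+_\la)$ and $S\in\Hom(\tau',\a^-_\mu)$ and an isometry carrying $\be$ into some $\a^\pm_\kappa$ — so that every factor becomes a composite of the underlying braidings $\e^\pm$ of $\C$; the equality then follows from the Yang--Baxter relation for $\e^\pm$ together with the naturality of $\e^\pm$, and the independence of $\E_r$ from the auxiliary choices (\cite[Subsection 3.3]{BE3}) guarantees that the choice of $T$, $S$ and the splitting of $\be$ does not matter.

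The main obstacle is precisely this last verification. Because the relative braiding is not a braiding on all of $\D$ but only braids $\D^+$ against $\D^-$, each hexagon move must respect chirality: the object $\be$ enters as an object of $\D^+$ in the factor $\E_r(\be,\tau')$ and as an object of $\D^-$ in the factor $\E_r(\tau,\be)$, and it is the ambichirality $\be\in\D^0$ that makes these two roles compatible. Keeping the $+/-$ labels and the adjoints correct throughout the hexagon computation, and checking that the $T$, $S$ data indeed cancel, is where the real work lies; the categorical reduction preceding it is formal, resting only on Theorem \ref{qdim} and the description of morphisms of half-braidings.
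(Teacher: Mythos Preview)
Your proposal is correct and follows essentially the same strategy as the paper: reduce the $R$-$R$ intertwining condition to the assertion that $\E_r(\tau,\tau')$ intertwines the two product half-braidings on $\tau\tau'$ and $\tau'\tau$ with respect to $\D^0$, and then verify that identity from the naturality and braid relations of the relative braiding. The paper compresses the verification step into a citation of \cite[Proposition~3.12]{BE3} and \cite[Lemma~3.25]{BE1}, which encode precisely the hexagon/naturality manipulations you describe in terms of pushing everything down to $\e^\pm$ in $\C$; your explicit identification $\E^+_\tau(\be)=\E_r(\tau,\be)$, $\E^-_{\tau'}(\be)=\E_r(\be,\tau')^*$ for $\be\in\D^0$ and the ensuing Yang--Baxter check are exactly the content of those references.
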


\begin{proof}
We show that $\E_r(\tau,\tau')$ gives an intertwiner
on the level of half-braiding.
Then the arguments in the proof of \cite[Proposition 3.12]{BE3}
based on \cite[Lemma 3.25]{BE1} give the desired conclusion.
\end{proof}

We also have the following.

\begin{proposition}
For (possibly reducible) objects $\tau,\tau'$ of $\D^+$, we have
$\Hom(\eta(\tau,\E^+_{\tau}),\eta(\tau',\E^+_{\tau'}))=
\Hom(\tau,\tau')$.
\end{proposition}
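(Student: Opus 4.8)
The plan is to turn the statement into a naturality property of the relative braiding of \cite{BE3}. Since we are computing $(\D^0)'\cap\D$, the subcategory in the sense of Section \ref{tube} is $\D^0$, and Theorem \ref{qdim} (applied with $\C=\D^0$) identifies $(\D^0)'\cap\D$ with the category of $R$-$R$ morphisms through the functor $\eta$, which is therefore fully faithful. Thus I would first rewrite
\[
\Hom(\eta(\tau,\E^+_\tau),\eta(\tau',\E^+_{\tau'}))
=\{X\in\Hom(\tau,\tau'):\E^+_{\tau'}(\be)X=\be(X)\E^+_\tau(\be)\ \text{for all}\ \be\in\Irr(\D^0)\},
\]
using the description of morphisms in $\C'\cap\D$ recorded in Section \ref{tube}. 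It then remains to prove that the displayed side-condition is automatically satisfied by every $X\in\Hom(\tau,\tau')$, so that the right-hand side collapses to $\Hom(\tau,\tau')$.

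The key point is that on $\D^0\subset\D^-$ the half-braiding $\E^+$ is nothing but the relative braiding. For $\be\in\D^0$ I would decompose $\be$ as a subobject of some $\a^-_\mu$, that is, take $\nu=\id$ in the defining formula for $\E^+_{\a^+_\la}(\be)$, and compare with the formula for $\E_r(\a^+_\la,\be)$ with the corresponding isometry; choosing the two isometries equal, the expressions coincide, so $\E^+_{\a^+_\la}(\be)=\E_r(\a^+_\la,\be)$. Because $\E^+_{\a^+_\la}(\be)$ is independent of the chosen decomposition of $\be$ by \cite[Lemma 3.1]{BEK3}, this identity does not depend on having used a $\D^-$-decomposition of $\be$, and it passes to arbitrary (possibly reducible) $\tau\in\D^+$ by writing $\tau$ as a subobject of a sum of $\a^+$'s.

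Granting $\E^+_\tau(\be)=\E_r(\tau,\be)$ for $\be\in\D^0$, the required compatibility becomes exactly the naturality of the relative braiding in its first variable: for $X\in\Hom(\tau,\tau')$ one has $\E_r(\tau',\be)X=\be(X)\E_r(\tau,\be)$ for every object $\be$ in $\D^-$, by the arguments in the proof of \cite[Proposition 3.12]{BE3} based on \cite[Lemma 3.25]{BE1} (the same input already invoked for Proposition \ref{inter}). Specialising to $\be\in\Irr(\D^0)$ and substituting $\E^+=\E_r$ gives $\E^+_{\tau'}(\be)X=\be(X)\E^+_\tau(\be)$ for all $X\in\Hom(\tau,\tau')$, whence the side-condition is vacuous and the two Hom-spaces agree.

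I expect the main obstacle to be the middle step, namely establishing $\E^+_\tau(\be)=\E_r(\tau,\be)$ for reducible $\tau$ and $\be\in\D^0$ in a way that genuinely reconciles the $\D$-decomposition $\be\subset\a^+_\nu\a^-_{\nu'}$ built into $\E^+$ with the $\D^-$-decomposition $\be\subset\a^-_\mu$ built into $\E_r$; here the independence-of-choices in \cite[Lemma 3.1]{BEK3} is doing the real work. Once this identification and the naturality of $\E_r$ are in place, the extension from the generators $\a^+_\la$ to general objects of $\D^+$ is routine, though it should be recorded since the proposition is stated for possibly reducible $\tau,\tau'$.
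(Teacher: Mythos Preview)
Your argument is correct, but it takes a somewhat different route from the paper's two-line proof. The paper first observes that $\Hom(\tau,\tau')\subset\Hom(\eta(\tau,\E^+_\tau),\eta(\tau',\E^+_{\tau'}))$ is ``easy to see'' (this is exactly the naturality of the relative braiding that you spell out carefully), and then concludes equality by a dimension count: since Proposition~4.13 already shows that the $\eta(\tau,\E^+_\tau)$ for $\tau\in\Irr(\D^+)$ are mutually inequivalent irreducibles, the two Hom-spaces have the same dimension on irreducibles, hence on all objects by additivity.

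You instead get the reverse inclusion $\Hom(\eta,\eta')\subset\Hom(\tau,\tau')$ for free from the full faithfulness of $\eta$ furnished by Theorem~\ref{qdim}, and then prove the naturality direction in detail via the identification $\E^+_\tau(\be)=\E_r(\tau,\be)$ for $\be\in\D^0$ (which is indeed how the paper intends $\E^+_\tau$ to be defined ``in a way similar to the above''). Your approach is more self-contained in that it does not invoke the classification of simple objects, whereas the paper's dimension argument is shorter precisely because that classification has already been established. The step you flag as the main obstacle---reconciling the $\a^+_\nu\a^-_{\nu'}$ decomposition in the definition of $\E^+$ with the $\a^-_\mu$ decomposition in $\E_r$ by taking $\nu=\id$---is exactly right and is handled by the independence-of-choices in \cite[Lemma~3.1]{BEK3}, so there is no gap.
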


\begin{proof}
It is easy to see that the right hand side is contained in the
left hand side.  The dimensions of the both hand sides are equal,
so we have the equality.
\end{proof}

We also have a similar equality for $\eta(\tau,\E^-_{\tau})$
for an object of $\D^-$.

Now consider the fusion category whose irreducible objects are given by
$\{\eta(\tau,\E^+_{\tau})\eta(\tau',\E^-_{\tau'})\}$ with
$\tau\in\Irr(\D^+)$ and $\tau'\in\Irr(\D^-)$.  We 
consider the $6j$-symbols of this fusion category.  Then it splits
as a product of two $6j$-symbols as in Fig. \ref{q6j}.
In this diagram, we follow the graphical convention of
\cite[Section 3]{BEK1}.  In particular, we
compose morphisms from the top to the bottom.
The thick wire represents an irreducible object of the
fusion category $\Irr(\D^+)$ and the thin wires represents
an irreducible object of the fusion category $\Irr(\D^-)$.
The inner products in Fig. \ref{q6j} represents those between
two intertwiners and the compositions give complex numbers.
When we compose two irreducible morphisms, we switch two
components using Proposition \ref{inter} and this give a
relative braiding $\E_r$ at the upper left corner of 
Fig. \ref{q6j}.  All the crossings in  Fig. \ref{q6j}
represent the relative braiding $\E_r$ or its conjugate.

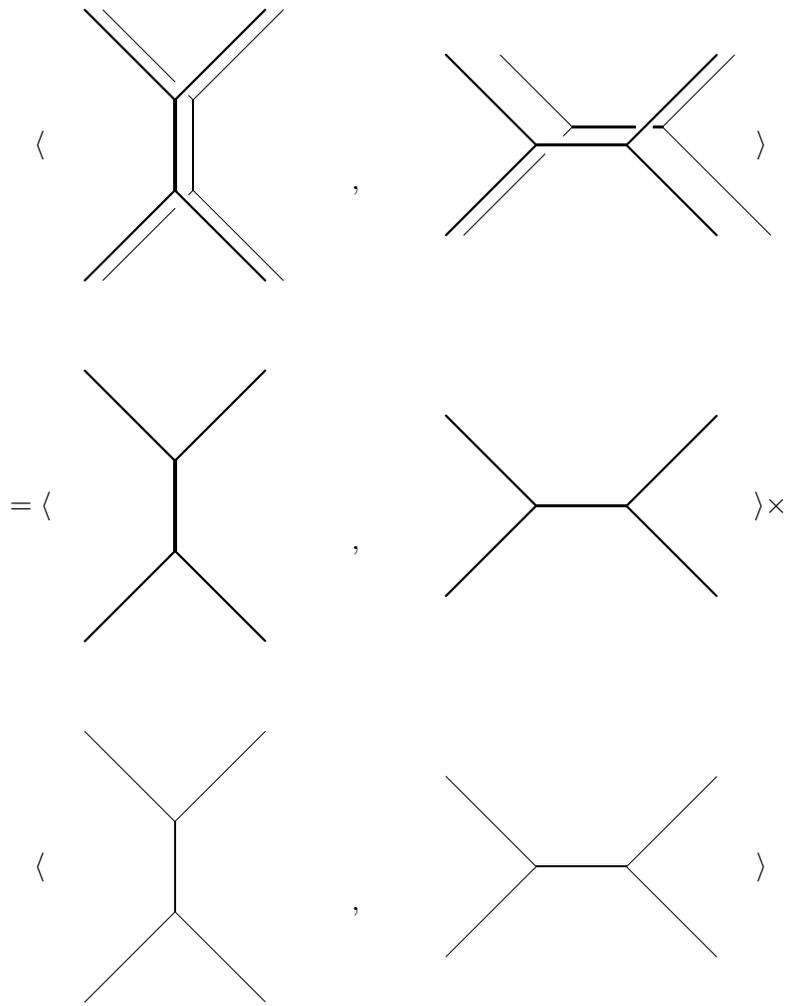
\begin{figure}[htb]
\begin{center}
\unitlength 1.2mm
\begin{picture}(90,130)
\thicklines
\put(10,90){\line(1,1){10}}
\put(20,100){\line(1,-1){10}}
\put(20,100){\line(0,1){10}}
\put(20,110){\line(-1,1){10}}
\put(20,110){\line(1,1){10}}
\put(50,95){\line(1,1){10}}
\put(50,115){\line(1,-1){10}}
\put(60,105){\line(1,0){10}}
\put(70,105){\line(1,1){10}}
\put(70,105){\line(1,-1){10}}
\put(10,50){\line(1,1){10}}
\put(20,60){\line(1,-1){10}}
\put(20,60){\line(0,1){10}}
\put(20,70){\line(-1,1){10}}
\put(20,70){\line(1,1){10}}
\put(50,55){\line(1,1){10}}
\put(50,75){\line(1,-1){10}}
\put(60,65){\line(1,0){10}}
\put(70,65){\line(1,1){10}}
\put(70,65){\line(1,-1){10}}
\thinlines
\put(10,10){\line(1,1){10}}
\put(20,20){\line(1,-1){10}}
\put(20,20){\line(0,1){10}}
\put(20,30){\line(-1,1){10}}
\put(20,30){\line(1,1){10}}
\put(50,15){\line(1,1){10}}
\put(50,35){\line(1,-1){10}}
\put(60,25){\line(1,0){10}}
\put(70,25){\line(1,1){10}}
\put(70,25){\line(1,-1){10}}
\put(12,90){\line(1,1){8}}
\put(22,100){\line(-1,-1){0.5}}
\put(22,100){\line(1,-1){10}}
\put(22,100){\line(0,1){10}}
\put(22,110){\line(-1,1){0.5}}
\put(12,120){\line(1,-1){8}}
\put(22,110){\line(1,1){10}}
\put(52,95){\line(1,1){9}}
\put(64,107){\line(-1,-1){1}}
\put(64,107){\line(-1,1){8}}
\put(64,107){\line(1,0){7}}
\put(74,107){\line(-1,0){1}}
\put(74,107){\line(1,1){8}}
\put(74,107){\line(1,-1){12}}
\put(5,25){\makebox(0,0){$\langle$}}
\put(4,65){\makebox(0,0){$=\langle$}}
\put(5,105){\makebox(0,0){$\langle$}}
\put(85,25){\makebox(0,0){$\rangle$}}
\put(86,65){\makebox(0,0){$\rangle\times$}}
\put(85,105){\makebox(0,0){$\rangle$}}
\put(40,100){\makebox(0,0){$,$}}
\put(40,60){\makebox(0,0){$,$}}
\put(40,20){\makebox(0,0){$,$}}
\end{picture}
\end{center}
\caption{$6$j-symbols}
\label{q6j}
\end{figure}

We thus obtain the following theorem.

\begin{theorem}
The fusion category $(\D^0)'\cap \D$ is equivalent to
$\D^+\boxtimes \D^-$.
\end{theorem}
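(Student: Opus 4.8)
The plan is to promote the bijection on simple objects already established in the preceding proposition---namely $(\tau,\tau')\mapsto\eta(\tau,\E^+_\tau)\eta(\tau',\E^-_{\tau'})$ for $\tau\in\Irr(\D^+)$ and $\tau'\in\Irr(\D^-)$---to a full monoidal equivalence $F\colon\D^+\boxtimes\D^-\to(\D^0)'\cap\D$ sending $\tau\boxtimes\tau'$ to $\eta(\tau,\E^+_\tau)\eta(\tau',\E^-_{\tau'})$ on objects. Essential surjectivity is then immediate, since these objects exhaust $\Irr((\D^0)'\cap\D)$. First I would check that $F$ is fully faithful, which amounts to identifying $\Hom(\eta(\tau_1,\E^+_{\tau_1})\eta(\tau_1',\E^-_{\tau_1'}),\eta(\tau_2,\E^+_{\tau_2})\eta(\tau_2',\E^-_{\tau_2'}))$ with $\Hom(\tau_1,\tau_2)\otimes_\CC\Hom(\tau_1',\tau_2')$, the Hom-space of $\D^+\boxtimes\D^-$. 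The inputs are the two Hom-space identities proved above, $\Hom(\eta(\tau,\E^+_\tau),\eta(\tau',\E^+_{\tau'}))=\Hom(\tau,\tau')$ and its $\D^-$-analogue, together with Proposition \ref{inter}: the relative braiding $\E_r$ permits commuting the inner $\D^-$-factor of the source past the $\D^+$-factor, thereby disentangling a morphism between the mixed products into its two chiral constituents, with a dimension count closing the identification.

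Next I would verify that $F$ respects the tensor product. On objects this is the requirement that the composite $\eta(\tau_1,\E^+_{\tau_1})\eta(\tau_1',\E^-_{\tau_1'})\cdot\eta(\tau_2,\E^+_{\tau_2})\eta(\tau_2',\E^-_{\tau_2'})$ decomposes in parallel with $(\tau_1\tau_2)\boxtimes(\tau_1'\tau_2')$. Here one again invokes the relative braiding of Proposition \ref{inter} to switch the middle factors $\eta(\tau_1',\E^-_{\tau_1'})$ and $\eta(\tau_2,\E^+_{\tau_2})$, after which the $\D^+$-parts and the $\D^-$-parts fuse independently; Theorem \ref{qdim} guarantees that no simple objects appear beyond those of the form $\eta(\mu,\E^+_\mu)\eta(\mu',\E^-_{\mu'})$, so the fusion rules of $(\D^0)'\cap\D$ match those of $\D^+\boxtimes\D^-$ exactly.

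The heart of the matter, and the step I expect to be the main obstacle, is compatibility with associativity: that the $6j$-symbols of $(\D^0)'\cap\D$ factor as a product of a $6j$-symbol of $\D^+$ and a $6j$-symbol of $\D^-$, as displayed in Fig. \ref{q6j}. In the graphical calculus, with thick wires for $\Irr(\D^+)$ and thin wires for $\Irr(\D^-)$, every recoupling of the mixed objects forces crossings that are instances of $\E_r$ or its conjugate, and the key point is that these crossings pass through the trivalent vertices without obstruction. This is precisely the braiding-fusion behaviour of the relative braiding established in \cite[Proposition 3.12]{BE3} via \cite[Lemma 3.25]{BE1}. Once all crossings are resolved, the thick and thin diagrams decouple completely, so the associator of the mixed category is the $\CC$-tensor product of the associators of $\D^+$ and of $\D^-$, which is by definition the associativity datum of the Deligne product. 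Combining full faithfulness, essential surjectivity, compatibility of fusion, and this factorization of the $6j$-symbols, I would conclude that $(\D^0)'\cap\D$ is equivalent to $\D^+\boxtimes\D^-$ as fusion categories.
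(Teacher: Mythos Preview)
Your proposal is correct and follows essentially the same route as the paper: the paper's argument consists precisely of the preceding propositions (enumerating the simples, identifying Hom-spaces, and Proposition \ref{inter}) together with the $6j$-symbol factorization displayed in Fig.~\ref{q6j}, where the relative braiding $\E_r$ supplies all crossings and allows the thick and thin wires to decouple. You have organized the same ingredients into an explicit monoidal-equivalence framework (full faithfulness, essential surjectivity, tensor and associativity compatibility), but the substantive step---that the $6j$-symbols split via the braiding-fusion behaviour of $\E_r$ from \cite[Proposition 3.12]{BE3} and \cite[Lemma 3.25]{BE1}---is identical to the paper's.
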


We show some example now.  A typical appearance of
$\a$-induction is an extension of a completely rational
local conformal net in the sense of
\cite[page 498]{KL}, \cite[Definition 8]{KLM},
\cite[Definition 3.1]{LR}.  Note that strong
additivity and split property in the definition
of complete rationality \cite[Definition 8]{KLM}
are unnecessary due to \cite{LX} and \cite{MTW},
respectively.  Let $\{\A(I)\subset\B(I)\}$ be
such an extension, where $I$ is an interval
contained in $S^1$.  Let $\C$ be the representation
category of the local conformal net $\{\A(I)\}$
and consider the $\a$-induction for a subfactor
$\A(I)\subset\B(I)$ for some interval $I$ as in
\cite[Definition 3.3]{BE1}.  Then we have
$\D^0,\D^\pm, \D$ from this $\a$-induction as in
\cite{BEK1}, so the above results apply.  Note 
that $\D^0$ is the representation category of 
$\{\B(I)\}$ and $\D^\pm$ are the categories of
soliton sectors.

Consider an extension of a completely rational
local conformal net arising from a conformal
embedding $SU(2)_{10}\subset SO(5)_1$ as in
\cite[Example 2.2]{BE2}.  In this case,
the category $\C$ has 11 simple objects,
and $\D^0,\D^+,\D^-,\D$ have 3, 6, 6, and 12
simple objects, respectively, as in \cite[Fig. 2]{BE3}.
This setting gives concrete examples to which the
above results apply.

\end{document}